\title{Spherical $\mathrm{CR}$ Dehn Surgeries}
\author{Miguel Acosta}
\begin{document}

\maketitle
%\tableofcontents
\begin{abstract}

Consider a three dimensional cusped spherical $\mathrm{CR}$ manifold $M$ and suppose that the holonomy representation of $\pi_1(M)$ can be deformed in such a way that the peripheral holonomy is generated by a non-parabolic element. We prove that, in this case, there is a spherical $\mathrm{CR}$ structure on some Dehn surgeries of $M$. The result is very similar to R. Schwartz's spherical $\mathrm{CR}$ Dehn surgery theorem, but has weaker hypotheses and does not give the unifomizability of the structure. We apply our theorem in the case of the Deraux-Falbel structure on the Figure Eight knot complement and obtain spherical $\mathrm{CR}$ structures on all Dehn surgeries of slope $-3 + r$ for $r \in \mathbb{Q}^{+}$ small enough.

\end{abstract}
\section{Introduction}

The celebrated theorem of hyperbolic Dehn surgeries of Thurston, stated in \cite{gt3m}, says that all but a finite number of Dehn surgeries of a one cusped hyperbolic manifold $M$ admit complete hyperbolic structures with developing maps and holonomy representations close to those of $M$. The same question arises for other geometric structures. We focus here on \CR {} structures i.e. structures modeled on the boundary at infinity of the complex hyperbolic  plane with group of automorphisms $\pu21$. In his book (\cite{schwartz}), Schwartz shows a \CR {} Dehn surgery theorem that gives, under some convergence hypotheses, uniformizable \CR {} structures on some Dehn surgeries on a cusped \CR {} manifold. In this paper, we prove a similar theorem using techniques coming from $(G,X)$-structures and the geometry of $\dh2c$ instead of the approach of discreteness of group representations and actions on $\h2c$. Theorem \ref{thm_chirurgie} has weaker hypotheses than Schwartz' theorem, but we obtain geometric structures on the surgeries for which we do not know whether they are uniformizable or not.
 
 For the reader, the example to keep in mind, treated in section \ref{section_deraux-falbel}, is the Deraux-Falbel structure on the figure eight knot complement constructed in \cite{falbel}. For this example, Deraux shows in \cite{deraux_uniformizations} that there is a one parameter family of \CR {}  uniformizations on the figure eight knot complement with parabolic peripheral holonomy. In \cite{character_sl3c}, Falbel, Guilloux, Koseleff, Rouillier and Thistlethwaite describe the $\mathrm{SL}_3(\mathbb{C})$-character variety of the fundamental group of the figure eight knot. They give an explicit parametrization for the component in $\su21$ containing the holonomy representation of the Deraux-Falbel structure. This component also gives rise to \CR {} structures near the Deraux-Falbel structure. With this parametrization and theorem \ref{thm_chirurgie}, we obtain the following proposition:
 
 \begin{thm*}
Let $M$ be the figure eight knot complement. For the usual \footnote{For us, the usual marking is the one given by Thurston in \cite{gt3m}. %Notice that in this marking the homology class of the longitude is not trivial.
} marking of the peripheral torus of $M$:
 \begin{enumerate}
\item There exist infinitely many  \CR {} structures on the Dehn surgery of $M$ of slope $-3$.
\item There exists $\delta > 0$ such that for all $r \in \mathbb{Q} \cap (0, \delta )$, there is a  \CR {} structure on the Dehn surgery of $M$ of slope $-3 + r$.
   \end{enumerate}
\end{thm*}

 In section \ref{section_h2c}, we recall some properties about $\h2c$, $\dh2c$ and $\pu21$ and set some notation. We look in detail at the dynamics of one parameter subgroups of $\pu21$ acting on $\dh2c$. Understanding these dynamics will be crucial to show the surgery theorem.
 Section \ref{section_surgeries} deals with deformation of $(G,X)$ structures, fixes some notation and a marking of a peripheral torus in order to state the main theorem of this paper (\ref{thm_chirurgie}). In section \ref{section_deraux-falbel},   we apply theorem \ref{thm_chirurgie} in the case of the Deraux-Falbel structure, by checking the hypotheses and looking at the deformation space as given in \cite{character_sl3c}. Finally, in section \ref{section_proof}, we give a complete proof of the surgery theorem.
\section{Generalities on $\h2c$ and its isometries} \label{section_h2c}
In this section we recall some facts about the hyperbolic complex plane $\h2c$ and its boundary at infinity $\dh2c$ and set notation for them. We study the group of holomorphic isometries of $\h2c$, identified to $\pu21$, by describing its one parameter subgroups. Almost all stated results can be found in the thesis of Genzmer \cite{genzmer} and in the book of Goldman \cite{goldman}.

\subsection{The space $\h2c$ and its boundary at infinity.}

 We begin by giving a construction of the hyperbolic complex plane. Let $V$ be a complex vector space of dimension 3 endowed with a Hermitian product  $\langle \cdot, \cdot \rangle$. Denote by $\Phi$ the associated Hermitian form. We suppose that $\Phi$ has signature $(2,1)$. By setting $V_{-} = \{v \in V\setminus \{0\} \mid \Phi(v) < 0 \}$ , $V_{0} = \{v \in V\setminus \{0\} \mid \Phi(v) = 0 \}$ and $V_{+} = \{v \in V\setminus \{0\} \mid \Phi(v) > 0 \}$, the complex hyperbolic plane is defined as $\mathbb{P}V_-$, endowed with the Hermitian metric induced by $\Phi$, and its boundary at infinity $\dh2c$ as $\mathbb{P}V_0$.
 
\begin{notat}
 We will use several times projectivizations of vector spaces and of groups of matrices.  In this case, we will denote with usual parentheses $"("$ and $")"$ the objects before projectivization and with square brackets $"["$ and $"]"$ the class of an object in the projectivized space. For example, if $Z \in \mathbb{C}^3 \setminus \{0\}$, then $[Z] \in \cp2$ is the projection of $Z$.
\end{notat}

From now on, we will use two different models of the complex hyperbolic plane, going from one to another by a conjugation. In both cases, the vector space is $V=\mathbb{C}^3$.

 \begin{notat}
  Let 
\[J_1 = \begin{pmatrix}
1 & 0 & 0 \\
0 & 1 & 0 \\
0 & 0 & -1 \\
\end{pmatrix}  
\text{ and }
 J_2 = \begin{pmatrix}
0 & 0 & 1 \\
0 & 1 & 0 \\
1 & 0 & 0 \\
\end{pmatrix} . \]

There are the matrices of the Hermitian products $\langle \cdot , \cdot \rangle_1$ and $\langle \cdot , \cdot \rangle_2$ given, for $W= \begin{pmatrix}W_1\\W_2\\W_3\end{pmatrix}$ and $Z=\begin{pmatrix}Z_1\\Z_2\\Z_3\end{pmatrix}$ in $\mathbb{C}^3$, by:

\[ \langle W , Z \rangle_1 ={}^t\con{W}J_1Z= \con{W_1}Z_1 + \con{W_2}Z_2 - \con{W_3}Z_3 \]

\[
 \langle W , Z \rangle_2 ={}^t\con{W}J_2Z= \con{W_1}Z_3 + \con{W_2}Z_2 + \con{W_3}Z_1
\]
  
 \end{notat}

These two Hermitian forms are conjugate by Cayley's matrix $C= \frac{1}{\sqrt{2}}  \begin{pmatrix}
1 & 0 & 1 \\
0 & \sqrt{2} & 0 \\
1 & 0 & -1 \\
\end{pmatrix}$, that satisfies $C^{-1} = C^* = C$
 \begin{defn}
  By identifying $V$ to $\mathbb{C}^3$ and $\langle \cdot , \cdot \rangle$ to $\langle \cdot , \cdot \rangle_1$, we obtain the \emph{ball model}. We then have :
  \[\h2c = \left\{ \begin{bmatrix}Z_1\\Z_2\\1\end{bmatrix} \in \cp2 \mid |Z_1|^2 + |Z_2|^2 < 1 
  \right\} \]
  
  \[\text{ and } \dh2c = \left\{ \begin{bmatrix}Z_1\\Z_2\\1\end{bmatrix} \in \cp2 \mid |Z_1|^2 + |Z_2|^2 = 1 
  \right\}\]
 \end{defn} 
 
 With this model, we see that $\h2c$ is homeomorphic to the ball $B^4$ and $\dh2c$ is homeomorphic to the sphere $S^3$. Let's see the other model that we will consider, the Siegel model.
 
 \begin{defn}
  By identifying $V$ to $\mathbb{C}^3$ and $\langle \cdot , \cdot \rangle$ to $\langle \cdot , \cdot \rangle_2$, we obtain the \emph{Siegel model}. It is given by:
 \[\h2c = \left\{ \begin{bmatrix}Z_1\\Z_2\\1\end{bmatrix} \in \cp2 \mid 2 \Re (Z_1) + |Z_2|^2 < 0 
  \right\} \subset \cp2 \]
  
  \[\text{ and } \dh2c = \left\{ \begin{bmatrix}-\frac{1}{2}(|z|^2+it)\\z\\1\end{bmatrix} \mid (z,t)\in \mathbb{C}\times \mathbb{R} 
  \right\} \cup \left\{\begin{bmatrix}1\\0\\0\end{bmatrix} \right\}\]  
  
 \end{defn}
 
 With this model, we can identify $\dh2c$ to $(\mathbb{C}\times \mathbb{R}) \cup \{\infty\}$. Removing the point at infinity, we obtain the Heisenberg group, defined as $\mathbb{C} \times \mathbb{R}$ with multiplication $(w,s)*(z,t) = (w+z,s+t+2\Im(w \con{z}))$.

We are going to use
% some remarkable objects of the complex hyperbolic plane, namely the
 complex geodesics, which are intersections of complex lines of $\mathbb{P}V$ with $\h2c$, and their boundaries at infinity, called $\mathbb{C}$-circles.

\subsection{Holomorphic isometries of $\h2c$ and invariant flows}

 We defined above the complex hyperbolic space and have seen two of its models. The group of holomorphic isometries of this space is $\pu21$, as described below. It acts transitively on the unit tangent fiber bundle of $\h2c$.
  
\begin{notat}
 Let $\mathrm{U}(2,1)$ be the group of matrices matrices of $\mathrm{GL}_3(\mathbb{C})$ such that $A^*JA = J$ for $J=J_1$ or $J_2$ (according to the model in which we work). Let $\su21$ be the subgroup of matrices of determinant 1 and $\pu21$ its projectivization.
\end{notat}
  
\begin{rem}
 Given $[A]\in \pu21$, there are exactly three lifts of $[A]$ in $\su21$, which are $A$, $\omega A$ and $\omega^2A$, where $\omega$ is a cube root of 1.
\end{rem}

 We state in detail a classification of the elements of $\pu21$. We use the notations and state the results of chapter one of Genzmer's thesis \cite{genzmer}. Each isometry of $\h2c$ extends continuously to $\h2c \cup \dh2c$, which is compact. By Brouwer's theorem, it has fixed points. Isometries are classified by their fixed points in  $\h2c \cup \dh2c$.

\begin{defn}
 An isometry $g \neq id$ of $\h2c$ is called :
 \begin{itemize} 
  \item \emph{elliptic} if it has at least one fixed point in $\h2c$.
  \item \emph{parabolic} if it is not elliptic and has exactly one fixed point in $\dh2c$
  \item \emph{loxodromic} if it is not elliptic and has exactly two fixed points in $\dh2c$. 
  
  \end{itemize}
\end{defn}

 We can state this classification in terms of eigenvalues. The eigenvalues of an element of $\pu21$ are only defined up to multiplication by $\omega$; we give a condition on the eigenvalues of a lift in $\su21$

\begin{prop}
 Let $U \in \su21 \setminus \{\mathrm{Id}\}$. Then $U$ is in one of the three following cases:
 
 \begin{enumerate}
  \item $U$ has an eigenvalue $\lambda$ of modulus different from 1. Then $[U]$ is loxodromic.
  \item $U$ has an eigenvector $v \in V_{-}$. Then $[U]$ is elliptic and its eigenvalues have modulus equal to 1 but are not all equal.
  \item all eigenvalues of $U$ have modulus 1 and $G$ has an eigenvector $v \in V_{0}$. Then $[U]$ is parabolic.
 \end{enumerate}
\end{prop}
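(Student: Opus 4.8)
The plan is to read the trichotomy off the spectrum of $U$, using only that $U$ preserves the Hermitian form $\Phi$ of signature $(2,1)$. Two elementary observations do all the work. First, from $U^{*}JU=J$ one gets $U^{*}=JU^{-1}J^{-1}$, so $U^{*}$ and $U^{-1}$ are conjugate and share a characteristic polynomial; comparing eigenvalues shows that the multiset of eigenvalues of $U$ is invariant under $\lambda\mapsto 1/\con{\lambda}$, while $\det U=1$ says that their product is $1$. Second, if $Uv=\lambda v$ with $v\neq 0$ then $\langle v,v\rangle=\langle Uv,Uv\rangle=|\lambda|^{2}\langle v,v\rangle$, so either $|\lambda|=1$ or $v\in V_{0}$; and when $\langle v,v\rangle\neq 0$ the $\Phi$-orthogonal complement $v^{\perp}$ is a $U$-invariant plane on which $\Phi$ is nondegenerate, with $V=\mathbb{C}v\oplus v^{\perp}$. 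I will also use that the fixed points of $[U]$ in $\cp2$ are exactly the projectivizations of the eigenvectors of $U$, and that (by Brouwer, as recalled above) $[U]$ has a fixed point in $\h2c\cup\dh2c$. Throughout I assume $[U]\neq\mathrm{id}$, i.e.\ that $U$ is not a scalar matrix; this is implicit in the statement, since an isometry must differ from the identity before it can be called elliptic, parabolic or loxodromic, and to make the three cases mutually exclusive one should also read the third as requiring that $U$ have no eigenvector in $V_{-}$.

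First I would treat the case where $U$ has an eigenvalue $\lambda$ with $|\lambda|\neq 1$. Then the eigenvalues are $\lambda$, $1/\con{\lambda}$ and $\mu:=\con{\lambda}/\lambda$, pairwise distinct since the first two have moduli $|\lambda|$ and $|\lambda|^{-1}$ (both different from $1$ and from each other) while $|\mu|=1$; so $U$ is diagonalizable. By the second observation the eigenlines for $\lambda$ and $1/\con{\lambda}$ lie in $V_{0}$, which already gives two fixed points of $[U]$ on $\dh2c$. Writing $\Phi$ in the eigenbasis and using that $\langle v_{i},v_{j}\rangle=0$ whenever $\con{\lambda_{i}}\lambda_{j}\neq 1$, the form turns out to be antidiagonal on the two null eigenvectors and diagonal in the remaining slot; nondegeneracy together with signature $(2,1)$ then forces the $\mu$-eigenvector into $V_{+}$. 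Hence $[U]$ has no fixed point in $\h2c$ and exactly two on $\dh2c$, so it is loxodromic.

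Next, suppose $U$ has an eigenvector $v\in V_{-}$. Then $[v]\in\h2c$ is fixed by $[U]$, so $[U]$ is elliptic by definition. The corresponding eigenvalue has modulus $1$, and $v^{\perp}$ is a $U$-invariant plane on which $\Phi$ is positive definite (signature $(2,0)$), so $U|_{v^{\perp}}$ is unitary for a definite form, hence diagonalizable with eigenvalues of modulus $1$. Thus $U$ is diagonalizable and all its eigenvalues have modulus $1$; they cannot all be equal, for then $U$ would be scalar and $[U]=\mathrm{id}$, against our assumption.

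Finally, the remaining case is that all eigenvalues of $U$ have modulus $1$ (else we are in the first case) while $U$ has no eigenvector in $V_{-}$ (else in the second); I must exhibit an eigenvector in $V_{0}$ and show $[U]$ is parabolic. By Brouwer $[U]$ fixes a point of $\h2c\cup\dh2c$, and since there is no negative eigenvector this point lies on $\dh2c$, so $U$ has a null eigenvector — the hypothesis of the third case. It remains to rule out a second boundary fixed point. Suppose $[w_{1}]\neq[w_{2}]$ are fixed on $\dh2c$ with $Uw_{k}=\lambda_{k}w_{k}$ and $\langle w_{k},w_{k}\rangle=0$, and put $W=\mathrm{span}(w_{1},w_{2})$, a $U$-invariant plane. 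If $\lambda_{1}\neq\lambda_{2}$, then $\con{\lambda_{1}}\lambda_{2}=\lambda_{2}/\lambda_{1}\neq 1$ forces $\langle w_{1},w_{2}\rangle=0$, so $W$ is totally isotropic, impossible in signature $(2,1)$. If $\lambda_{1}=\lambda_{2}$, then $U|_{W}$ is scalar, while $W$ contains two distinct isotropic lines; a short check on the possible restrictions of a signature $(2,1)$ form to a $2$-plane shows that $\Phi|_{W}$ must then have signature $(1,1)$, so $W$ contains a negative vector, which is an eigenvector in $V_{-}$ — excluded. Hence $[U]$ fixes exactly one point of $\dh2c$ and none of $\h2c$, so it is parabolic, and the three cases are now visibly exhaustive. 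The step I expect to require the most care is this last one — the uniqueness of the boundary fixed point — since there one must genuinely exploit the signature of $\Phi$ on $U$-invariant planes rather than merely count eigenvalues.
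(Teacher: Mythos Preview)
The paper does not prove this proposition: it is stated as background, with an attribution to Genzmer's thesis (and, implicitly, to Goldman's book). So there is no ``paper's own proof'' to compare against.

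Your argument, however, is correct and self-contained. The two elementary observations you isolate---that the spectrum of $U\in\su21$ is stable under $\lambda\mapsto 1/\con{\lambda}$, and that $\langle v,v\rangle(|\lambda|^2-1)=0$ for any eigenvector---are exactly the right engine, and your case analysis is clean. In Case~1 the signature computation in the eigenbasis forcing the $\mu$-eigenvector into $V_{+}$ is the standard one and is carried out correctly. In Case~3 the delicate point, as you anticipate, is uniqueness of the boundary fixed point; your two subcases ($\lambda_1\neq\lambda_2$ giving a $2$-dimensional totally isotropic subspace, impossible in Witt index~$1$; and $\lambda_1=\lambda_2$ producing a $(1,1)$-plane on which $U$ is scalar, yielding a forbidden negative eigenvector) handle it without gaps. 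Your caveat that the statement must be read with $[U]\neq\mathrm{id}$ (equivalently $U$ non-scalar) is also well taken: as written, $U=\omega\,\mathrm{Id}$ lies in $\su21\setminus\{\mathrm{Id}\}$ and falls into Case~2 with all eigenvalues equal, so the ``not all equal'' clause only holds under that implicit assumption.
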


To refine on this classification, we will consider different cases when there are double eigenvalues. We give the following definition:

\begin{defn} \label{def_regulier}
 Let $U \in \su21 \setminus \{\mathrm{Id}\}$. We say that $U$ is
 
 \begin{enumerate}
  \item \textit{regular} if its three eigenvalues are different.
  \item \textit{unipotent} if its three eigenvalues are equal (and so equal to a cube root of 1).
%  \item \textit{special} if $U$ is not in the cases above.
\end{enumerate}  
\end{defn}

The definition extends to $\pu21$ ; we will speak of regular elements of $\pu21$. In that case the eigenvalues are well defined up to multiplication by $\omega$. Thanks to the following remark, we know that regular elements are easier to manipulate : 

\begin{rem}
 Let $[U]\in \pu21$ be a regular element. Then
 $[U]$ is determined by its three eigenvalues $\alpha, \beta , \gamma$ and its three fixed points $[u],[v],[w]$ in $\cp2$.
\end{rem}

It is possible to know if an element is regular only by knowing its trace. An element of $\su21$ is regular if and only if its characteristic polynomial has no double root. But, if $U \in \su21$ and $z= \mathrm{tr}(U)$, the characteristic polynomial of $U$ is $\chi_U= X^3 - zX^2 + \overline{z}X - 1$. We only need then to compute the resultant of $\chi_U$ and $\chi_U'$. We get the next proposition, that can be found in Goldman's book \cite{goldman}.

 \begin{prop}\label{fonction_goldman}
  For $z \in  \mathbb{C}$, let $f(z)=|z|^4-8\Re (z^3) + 18|z|^2-27$. Let $U \in \su21$. Then $U$ is regular if and only if $f(\mathrm{tr}(U))\neq 0$. Furthermore, if $f(\mathrm{tr}(U))< 0$ then $[U]$ is regular elliptic and if $f(\mathrm{tr}(U))>0$ then $[U]$ is loxodromic. 
 \end{prop}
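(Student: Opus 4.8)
The plan is to recognise $f(z)$ as the discriminant of the characteristic polynomial of $U$, and then to read off its sign from the constraints that membership in $\su21$ imposes on the eigenvalues of $U$.

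First I would record the shape of the characteristic polynomial. For $U \in \su21$ with $z = \mathrm{tr}(U)$, the constant coefficient of $\chi_U$ is $-\det(U) = -1$, while the coefficient of $X$ is $\det(U)\,\mathrm{tr}(U^{-1}) = \mathrm{tr}(U^{-1})$; since $U^{*}JU = J$ gives $U^{-1} = J^{-1}U^{*}J$, this coefficient equals $\mathrm{tr}(U^{*}) = \overline{\mathrm{tr}(U)} = \overline{z}$, so $\chi_U = X^{3} - zX^{2} + \overline{z}X - 1$. Substituting $(p,q,r) = (-z,\overline{z},-1)$ into the discriminant $18pqr - 4p^{3}r + p^{2}q^{2} - 4q^{3} - 27r^{2}$ of a monic cubic $X^{3} + pX^{2} + qX + r$ yields, after a direct computation, exactly $|z|^{4} - 8\Re(z^{3}) + 18|z|^{2} - 27 = f(z)$. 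As $U$ is regular precisely when $\chi_U$ has three distinct roots, that is when its discriminant is nonzero, the first assertion follows: $U$ is regular if and only if $f(\mathrm{tr}(U)) \neq 0$.

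For the sign, I would use that the discriminant equals $\prod_{i<j}(\lambda_i - \lambda_j)^{2}$, where $\lambda_1,\lambda_2,\lambda_3$ denote the eigenvalues of $U$, together with $\lambda_1\lambda_2\lambda_3 = 1$ and the invariance of this multiset under $\lambda \mapsto 1/\overline{\lambda}$ (again a consequence of $U^{-1} = J^{-1}U^{*}J$). If $[U]$ is regular elliptic, the preceding proposition gives $|\lambda_j| = 1$ for all $j$, so $\lambda_j = e^{i\theta_j}$ with $\theta_1 + \theta_2 + \theta_3 \equiv 0 \pmod{2\pi}$; factoring each $e^{i\theta_j} - e^{i\theta_k} = 2i\,e^{i(\theta_j+\theta_k)/2}\sin\frac{\theta_j-\theta_k}{2}$, the Vandermonde product $\prod_{i<j}(\lambda_i - \lambda_j)$ becomes $-8i\prod_{i<j}\sin\frac{\theta_i-\theta_j}{2}$, which is purely imaginary and, by regularity, nonzero; hence $f(\mathrm{tr}(U)) < 0$. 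If $[U]$ is loxodromic, it has an eigenvalue $\lambda$ with $|\lambda| \neq 1$; then $1/\overline{\lambda} \neq \lambda$ is also an eigenvalue, and the third one is $\overline{\lambda}/\lambda$, of modulus $1$. Writing $\lambda = re^{i\phi}$ with $r \neq 1$, the eigenvalues are $re^{i\phi}$, $\frac1r e^{i\phi}$, $e^{-2i\phi}$, and the Vandermonde product simplifies to $(r - \frac1r)(2\cos 3\phi - r - \frac1r)$, which is real and nonzero because $r + \frac1r > 2 \geq 2\cos 3\phi$; hence $f(\mathrm{tr}(U)) > 0$. Since a regular element is either elliptic or loxodromic — a parabolic element has a repeated eigenvalue, hence is never regular — the remaining implications follow by elimination: $f < 0$ forces $[U]$ to be regular and not loxodromic, hence regular elliptic, and $f > 0$ forces $[U]$ to be regular and not elliptic, hence loxodromic.

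The only real computations are the discriminant identity and the two trigonometric simplifications of the Vandermonde product; the latter, which is the heart of the argument, is routine once the eigenvalues have been put into the normal forms dictated by $U \in \su21$. Alternatively one may simply cite Goldman's book for the expression of $f$.
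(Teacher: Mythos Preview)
Your proof is correct and follows exactly the route the paper indicates: the paper observes that $\chi_U = X^3 - zX^2 + \overline{z}X - 1$ and that regularity is detected by the resultant of $\chi_U$ and $\chi_U'$, then simply cites Goldman's book for the full statement, whereas you carry out the discriminant computation explicitly and supply the sign analysis via the Vandermonde product in the two normal forms for the eigenvalues. The only thing the paper contributes beyond your argument is the reference; your direct computation of the sign is a clean replacement for that citation.
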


\begin{rem} It is suitable to make two remarks about the proposition:
 \begin{enumerate}
  \item $f(z)=f(\omega z)$. Therefore we can define the function $f \circ \mathrm{tr}$ on $\pu21$.
  \item For a parabolic element $[U]$, the equality $f(\mathrm{tr}(U)) = 0$ holds, but there are nonregular elliptic elements for which $f(\mathrm{tr}(U)) = 0$.
 \end{enumerate}
\end{rem}

In order to study \CR {} structures and their surgeries, we will use the flows of vector fields associated to some elements of $\pu21$. The geometric objects that we are going to consider are invariant vector fields induced by elements of $\pu21$. Let's begin by looking at an infinitesimal level: an element of the Lie algebra $\mathfrak{su}(2,1)$ defines a vector field on $\dh2c$ invariant under its exponential map.

\begin{notat}
Let $X \in \mathfrak{su}(2,1)$. It defines a vector field on $\dh2c$ invariant by $\exp(X)$ given at a point $x$ by :
\[\left. \frac{\mathrm{d}}{\mathrm{d}t} \right|_{t=0} \exp(tX)\cdot x.\]

Let $\phi_t^X$ be the flow of this vector field, so $\phi_t^X(x) = \exp(tX)\cdot x$. If there is no ambiguity for $X$, we will only write $\phi_t$.
\end{notat}

\begin{rem}
 If $[U]\in \pu21$ is close enough to a unipotent element, it defines a vector field on $\dh2c$. Indeed, possibly after changing the lift, we can suppose that the eigenvalues of $U$ are near 1, and consider the vector field associated to $\Log(U)$. Then, $\phi^{\Log(U)}_1$ has the same action as $[U]$.
\end{rem}

\subsection{Description of isometries and invariant flows}

We are going to describe briefly the elements of $\pu21$, and classify them by their type and the dynamics of their action on $\cp2$.
\begin{comment}
%\section{Invariant flows}
In order to study \CR {} structures and their surgeries, we will use the flows of vector fields associated to some elements of $\pu21$. We give some reminders about one-parameter subgroups and the Lie algebra of $\su21$.

%\subsection{Invariant vector fields}
 The geometric objects that we are going to consider are invariant vector fields induced by elements of $\pu21$. Let's begin by looking at an infinitesimal level: an element of the Lie algebra $\mathfrak{su}(2,1)$ defines a vector field on $\dh2c$ invariant under its exponential map.

%\begin{notat}
Let $X \in \mathfrak{su}(2,1)$. It defines a vector field on $\dh2c$ invariant by $\exp(X)$ given at a point $x$ by :
\[\left. \frac{\mathrm{d}}{\mathrm{d}t} \right|_{t=0} \exp(tX)\cdot x.\]

%Let $\phi_t^X$ be the flow of this vector field, so $\phi_t^X(x) = \exp(tX)\cdot x$. If there is no ambiguity for $X$, we will only write $\phi_t$.
%\end{notat}

%\begin{rem}
% If $[M]\in \pu21$ is close enough to a unipotent element, it defines a vector field on $\dh2c$. Up changing the lift, we can suppose that the eigenvalues of $M$ are near 1, and consider the vector field associated to $\Log(M)$. Then, $\phi^{\Log(M)}_1$ has the same action as $[M]$.
%\end{rem}

\end{comment}
%\subsection{Description of flows} \label{section_flows}
 We are going to study the dynamics of some flows of the form $\phi_t^{\Log(U)}$, where $U$ is close to a unipotent element. We describe here flows associated to regular elliptic, loxodromic and parabolic elements.

\subsubsection{Regular elliptic flows}
 Consider a regular elliptic element in $\su21$ in the ball model. Perhaps after a conjugation, we can suppose that it is equal to
 
  \[E_{\alpha,\beta,\gamma} = \begin{pmatrix}
e^{i\alpha} & 0 & 0 \\
0 & e^{i\beta} & 0 \\
0 & 0 &  e^{i\gamma} \\
\end{pmatrix} .\]

We will also suppose that $\alpha$, $\beta$ and $\gamma$ are not all equal to zero and are small enough. In this case, 
$\gamma = -\alpha - \beta$ and

\[\Log(E_{\alpha,\beta,\gamma}) = \begin{pmatrix}
i\alpha & 0 & 0 \\
0 & i\beta & 0 \\
0 & 0 &  i\gamma \\
\end{pmatrix} .\]

The flow of the associated vector field acts on $\dh2c$ by:
\[\phi_t^{\Log(E_{\alpha,\beta,\gamma})} \begin{bmatrix}
Z_1 \\ Z_2 \\ 1
\end{bmatrix}
=
\begin{bmatrix}
e^{it(\alpha-\gamma)}Z_1 \\ e^{it(\beta-\gamma)}Z_2 \\ 1
\end{bmatrix} 
=
\begin{bmatrix}
e^{it(2\alpha+\beta)}Z_1 \\ e^{it(2\beta+\alpha)}Z_2 \\ 1
\end{bmatrix} \]

\begin{rem}
The flow stabilizes the two $\mathbb{C}$-circles $C_1=[e_1]^{\perp} \cap \dh2c = \left\{ \begin{bmatrix}
0 \\ e^{i\theta} \\ 1
\end{bmatrix} \mid \theta \in \mathbb{R}\right\}$ and $C_2 = [e_2]^{\perp} \cap \dh2c = \left\{ \begin{bmatrix}
 e^{i\theta} \\ 0 \\ 1
\end{bmatrix} \mid \theta \in \mathbb{R}\right\}$ on which it acts as rotations by angles $2 \beta + \alpha$ and $2\alpha + \beta$ respectively.

\begin{figure}[H]
\center
 \includegraphics[width=5cm]{./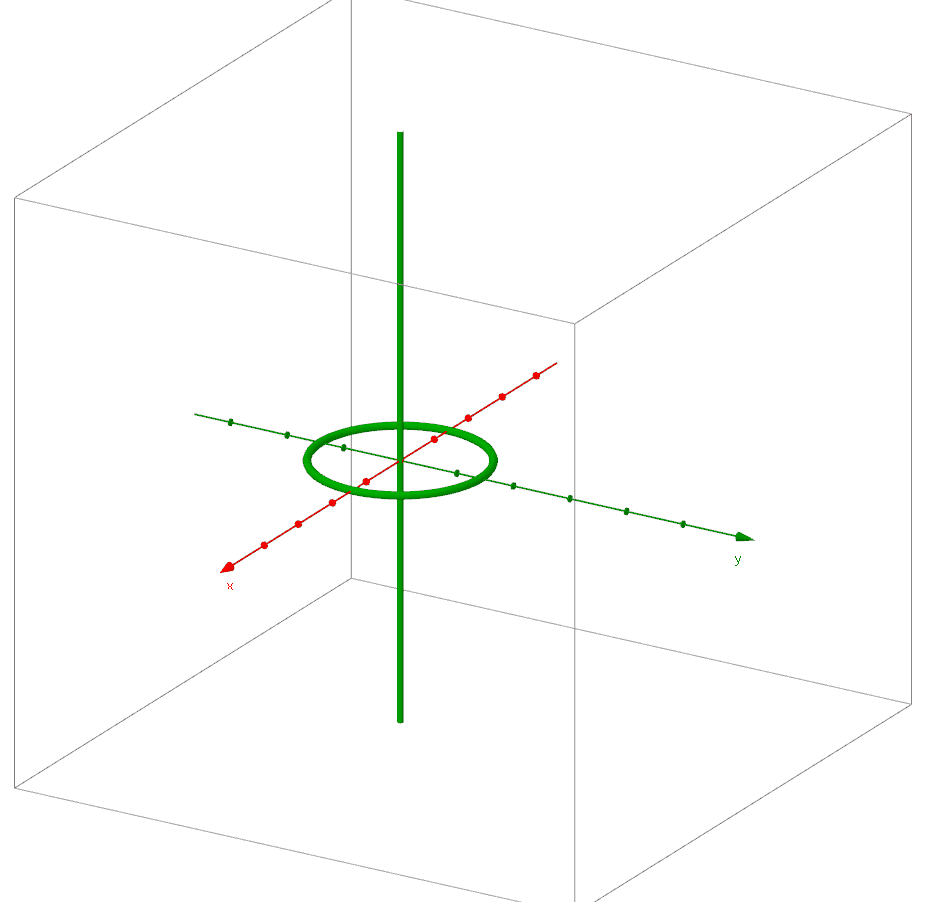}
 \caption{Invariant $\mathbb{C}$-circles for a regular elliptic flow in Siegel's model.}
\end{figure}
\end{rem}

\begin{rem}
 The centralizer of $E_{\alpha,\beta,\gamma}$ is $C(E_{\alpha,\beta,\gamma})= \left\{E_{\theta_1,\theta_2,-(\theta_1+\theta_2)} \mid (\theta_1, \theta_2) \in \mathbb{R}^2 \right\}$. The orbits of this subgroup in $\dh2c$ are $C_1$, $C_2$ and the subsets  $T_r$ for $r\in ]0,1[$ deined by
 
 \[T_r = \left\{ \begin{bmatrix}
 Z_1 \\ Z_2 \\ 1
\end{bmatrix} \in \dh2c \mid |Z_2|=r , |Z_1| = \sqrt{1-r^2}  \right\}.\] 

 The orbits $T_r$ are embedded tori in $\dh2c$ with core curves $C_1$ and $C_2$. They are all invariant under the action of $\phi_t^{\Log(E_{\alpha,\beta, \gamma})}$. We can see an example in figure \ref{tore_inv}.
\end{rem}

\begin{figure}[H]
\center
 \includegraphics[width=5cm]{./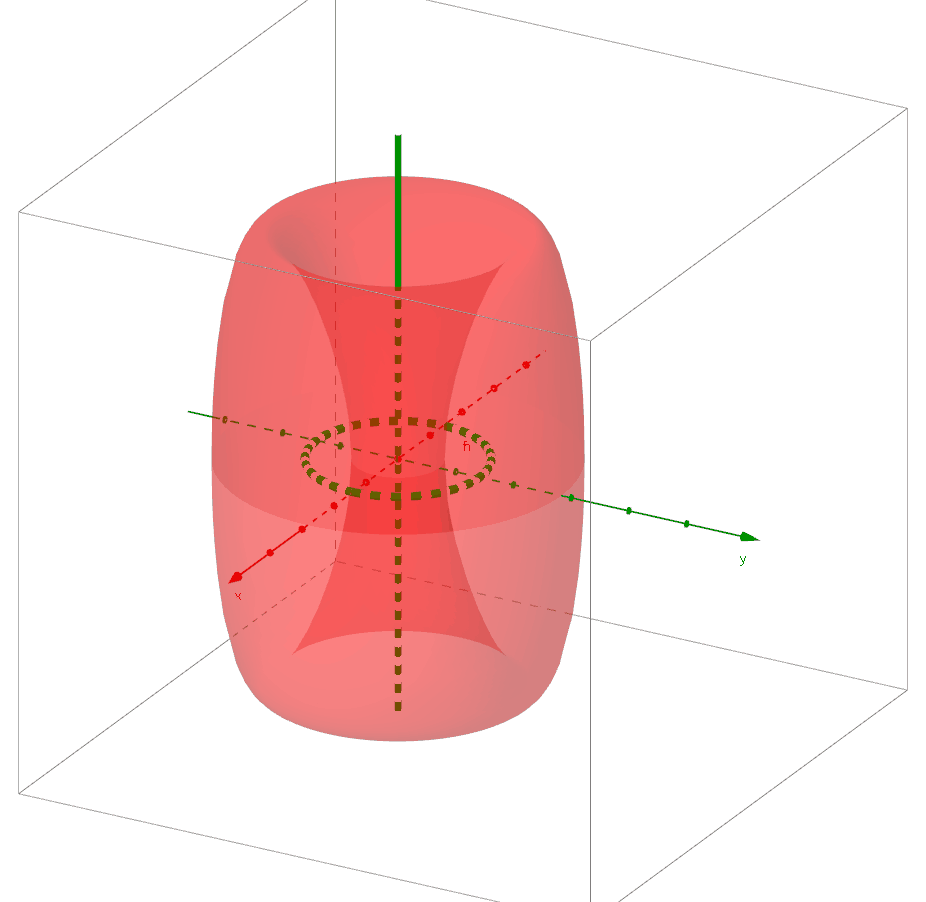}
 \caption{The invariant torus $T_{4/5}$ for an elliptic flow in Siegel's model.}\label{tore_inv}
\end{figure} 
 
 Let's now have a look at the orbits of the flow  $\phi_t^{\Log(E_{\alpha,\beta,\gamma})}$. Remark that the orbit of a point is included in a unique torus $T_r$, and that every orbit included in $T_r$ is the image of a fixed orbit by an element  $E_{\theta_1,\theta_2,-(\theta_1+\theta_2)}$. The torus $T_r$ is then foliated by these orbits. We fix $r \in ]0,1[$. We consider two cases
 
 \paragraph{Case 1 : $\frac{\alpha}{\beta} \notin \mathbb{Q}$.}
 
In this case, the angles of rotation in $T_r$ for $\phi_t$ are $(2\alpha + \beta)t$ and $(2 \beta + \alpha)t$. Since their quotient is irrational, an orbit is an injective immersion of a line and it is dense in $T_r$.

 \paragraph{Case 2 : $\frac{\alpha}{\beta} \in \mathbb{Q}$.} 
 
 In this case, the angles of rotation in $T_r$ for $\phi_t$ are $(2\alpha + \beta)t$ and $(2 \beta + \alpha)t$. Their quotient is rational; let's denote it $\frac{p}{q}$ in its irreducible form. The orbits are periodic and of slope $\frac{p}{q}$ in $T_r$ : they are torus knots of type $(p,q)$, knotted around $C_1$ and $C_2$. We can see an example in figure \ref{noeud_7_11}.

 \begin{figure}[H]
\center
 \includegraphics[width=5cm]{./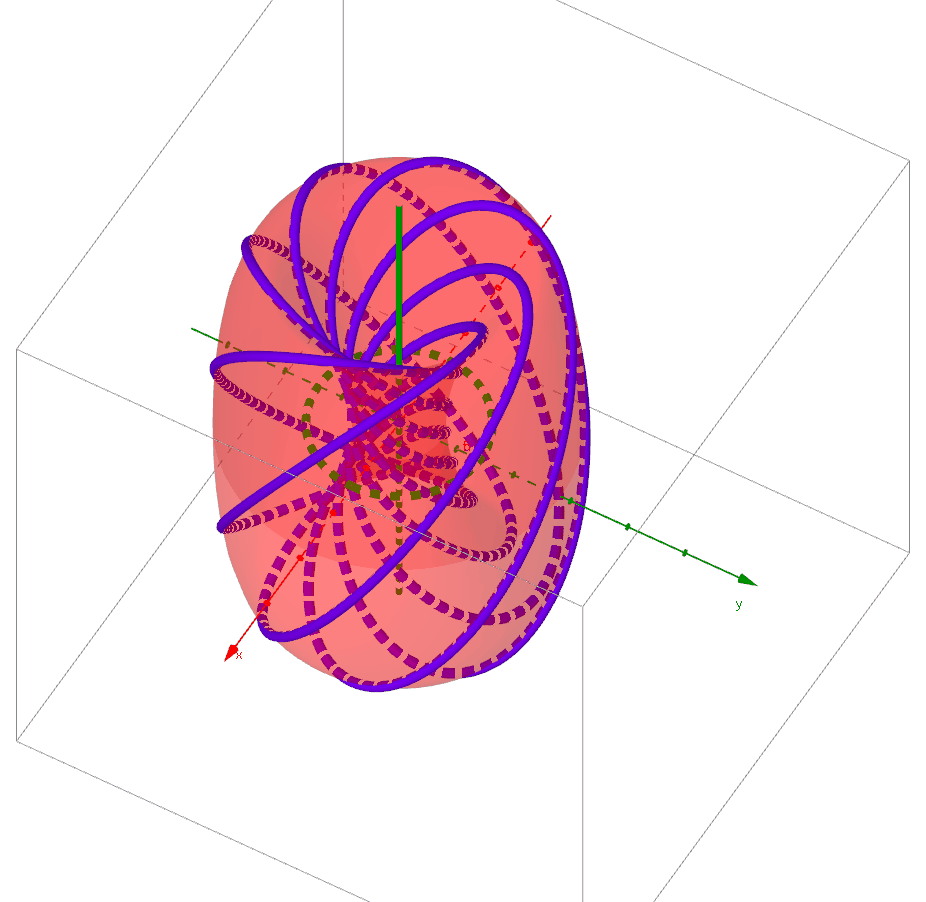}
 \caption{An orbit for $\frac{2\alpha + \beta}{2\beta + \alpha} = \frac{7}{11}$ : a torus knot of type (7,11).}\label{noeud_7_11}
\end{figure} 
 
\begin{rem} \label{rem_noeud_tor}
 If $p$ are $q$ are different from $\pm 1$, the orbit of a point of $T_r$ is a \emph{torus knot of type} $(p,q)$ and is knotted in $\dh2c$. If $p$ or $q$ equals $\pm 1$, then the orbit is not knotted; it is isotopic to an unknotted circle of $\dh2c$. We can see an example in figure \ref{noeud_1_3}.
\end{rem}
 
\begin{figure}[H]
\center
 \includegraphics[width=5cm]{./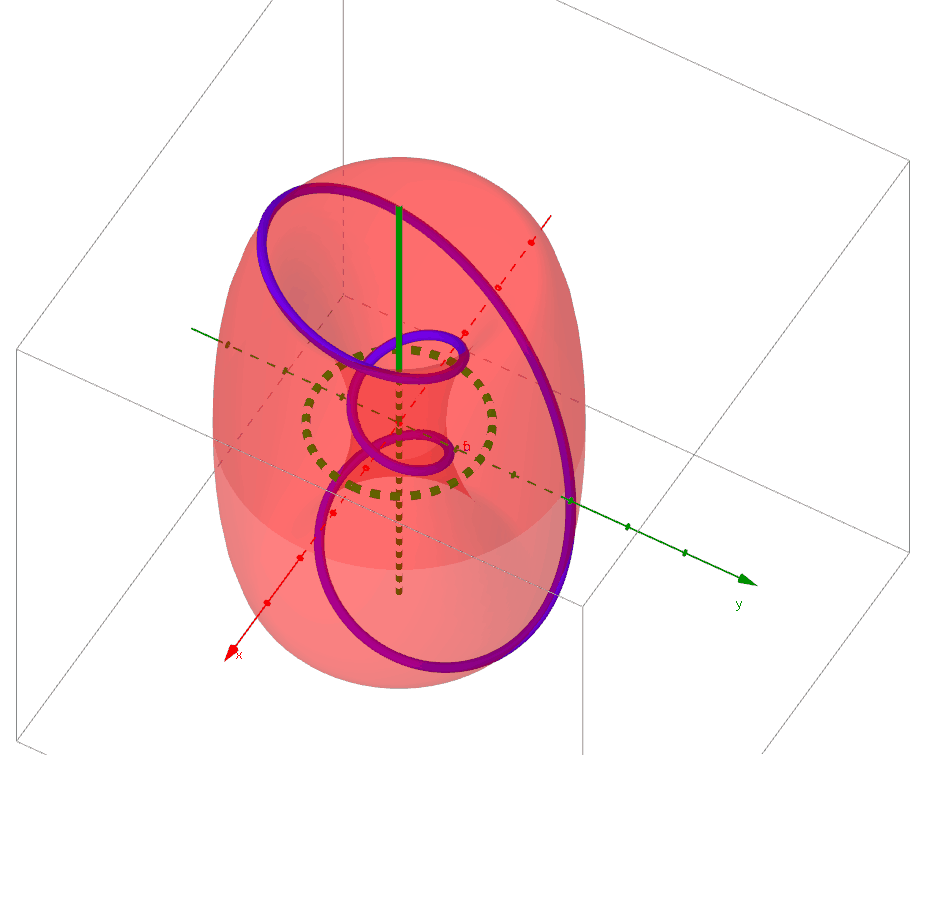}
 \caption{An orbit for $\frac{2\alpha + \beta}{2\beta + \alpha} = \frac{1}{3}$ : it is unknotted.}\label{noeud_1_3}
\end{figure}  
 
 \begin{defn}
  Let $n,p$ and $q$ be relatively prime integers with $|p| \geq |q|$. We say that an elliptic element $U \in \pu21$ is \emph{of type $(\frac{p}{n},\frac{q}{n})$} if $U$ is conjugate to $E_{\alpha,\beta,\gamma}$ with $\alpha = \frac{2p-q}{3n}$, $\beta = \frac{2q-p}{3n}$ and $\gamma = - \alpha - \beta = \frac{-p-q}{3n}$. In this case, $\frac{2\alpha + \beta}{2\beta + \alpha} = \frac{p}{q}$ and the orbits of the flow $\phi_t^{\Log(U)}$ are its two invariant $\mathbb{C}$-circles and torus knots of type $(p,q)$.
 \end{defn} 
 
\begin{rem}
 \begin{enumerate}
  \item Only some elliptic elements are of some type $(\frac{p}{n},\frac{q}{n})$. We will see later that elements of some type $(\frac{p}{n},\frac{q}{n})$ are the ones for which our construction happens to work.
  \item The trace of an elliptic element gives its three eigenvalues, but it is not enough to determine the type of the element. Indeed, an element of the same trace as an elliptic of type $(\frac{p}{n},\frac{q}{n})$ will have the same eigenvalues but not necessarily the same eigenvalue associated to its fixed point in $\h2c$. Thus, elements of type $(\frac{p}{n},\frac{q}{n})$, $(\frac{-p}{n},\frac{q-p}{n})$ and $(\frac{p-q}{n},\frac{-q}{n})$ have the same trace but are not conjugate.
 \end{enumerate}
 \end{rem}

\subsubsection{Loxodromic flows}

 Consider a loxodromic element in $\su21$ in Siegel's model. Perhaps after a conjugation, we can suppose that it is

 \[T_{\lambda} = \begin{pmatrix}
\lambda & 0 & 0 \\
0 & \frac{\overline{\lambda}}{\lambda} & 0 \\
0 & 0 &  \frac{1}{\overline{\lambda}} \\
\end{pmatrix} \]
where $\lambda \in \mathbb{C}$ is of modulus > 1. We have then $\lambda = re^{i\alpha}$, 	with $\alpha \in \mathbb{R}$ and $r>1$. We suppose that $\alpha$ is small enough, so the series $\Log(T_{\lambda})$ converges. In this case we have 
\[\Log(T_{\lambda}) = \begin{pmatrix}
\log(r) + i\alpha & 0 & 0 \\
0 & -2i\alpha & 0 \\
0 & 0 &  -\log(r) +i \alpha \\
\end{pmatrix}\]

The flow of the associated vector field acts on $\dh2c$ by:
\[\phi_t^{\Log(T_{\lambda})} \begin{bmatrix}
-\frac{1}{2}(|z|^2+is) \\ z \\ 1
\end{bmatrix}
=
\begin{bmatrix}
-\frac{1}{2}(r^{2t}|z|^2+ir^{2t}s) \\ r^te^{-3it\alpha}z \\ 1
\end{bmatrix} \]

In coordinates $(z,s)\in \mathbb{C} \times \mathbb{R}$ the action is given by $(z,t) \mapsto (\mu_t z , |\mu_t|^2s)$ where $\mu_t = r^{t}e^{-3i\alpha t}$.

\begin{rem}
 The flow $\phi_t$ fixes globally the points $\begin{bmatrix} 0 \\ 0 \\1 \end{bmatrix}$ and $\begin{bmatrix} 1 \\ 0 \\ 0 \end{bmatrix}$ and stabilizes the $\mathbb{C}$-circle joining them, which is called the \emph{axis} of $[T_{\lambda}]$. It also stabilizes the arcs $C_+$ and $C_-$ of this $\mathbb{C}$-circle, given by \[C_+ =  \left\{ \begin{bmatrix}
-\frac{1}{2}is \\ 0 \\ 1
\end{bmatrix} \in \dh2c \mid s >0  \right\} \text{ and } C_- =  \left\{ \begin{bmatrix}
-\frac{1}{2}is \\ 0 \\ 1
\end{bmatrix} \in \dh2c \mid s < 0  \right\}.\] Furthermore, for all $u\in \dh2c$, we have \[\lim_{t \to + \infty}\phi_t(u)= \begin{bmatrix} 1 \\ 0 \\ 0 \end{bmatrix} \text{ and } \lim_{t \to - \infty}\phi_t(u)= \begin{bmatrix} 0 \\ 0 \\ 1 \end{bmatrix}.\]

\end{rem}

\begin{figure}[H]
\center
 \includegraphics[width=5cm]{./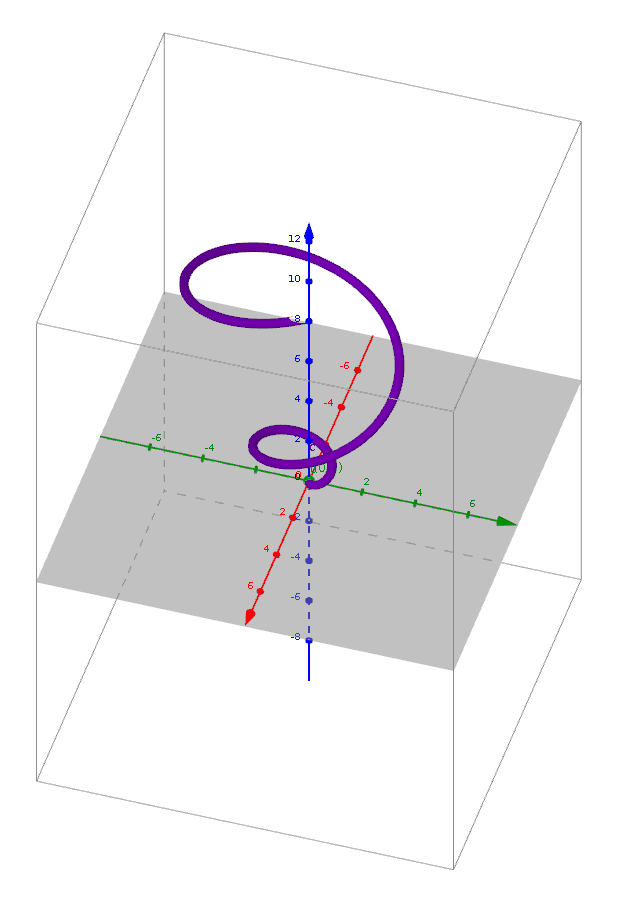}
 \caption{An orbit of a loxodromic flow in Siegel's model.}\label{flot_lox}
\end{figure} 

In the same way as in the elliptic case, we have flow-invariant objects, related to the centralizer of $T_\lambda$. We make the following remark:

\begin{rem}
 The centralizer of $T_{ \lambda}$ is $C(T_{\lambda})= \left\{T_{\mu} \mid \mu \in \mathbb{C}^* \right\}$. The orbits of this subgroup in $\dh2c$ are the two fixed points of $T_\lambda$, $C_+$, $C_-$ and the subsets $P_r$ for $r\in \mathbb{R}$, defined by
 
 \[P_r = \left\{ \begin{bmatrix}
-\frac{1}{2}(|z|^2+is) \\ z \\ 1
\end{bmatrix} \in \dh2c \mid \frac{s}{|z|^2}=r  \right\}.\] 

 The orbits $P_r$ are punctured paraboloids (with missing point $(0,0)$). Topologically, they are embedded cylinders in $\dh2c \setminus \left\{ \begin{bmatrix} 1 \\ 0 \\ 0 \end{bmatrix}, \begin{bmatrix} 0 \\ 0 \\ 1 \end{bmatrix} \right\}$, all invariant by the action of $\phi_t$. We can see an example in figure \ref{cyl_inv_lox}.
\end{rem}

\begin{figure}[H]
\center
 \includegraphics[width=5cm]{./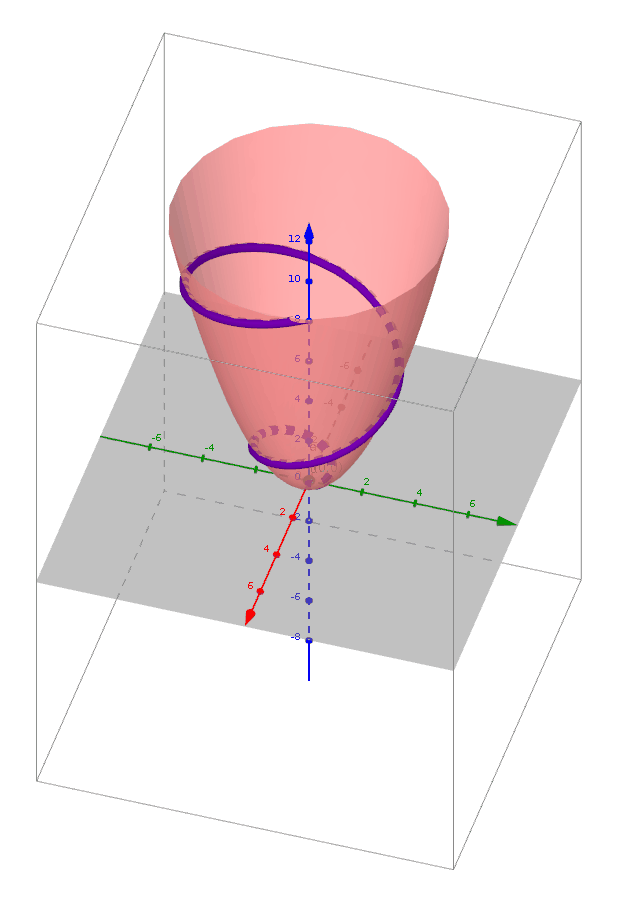}
 \caption{A cylinder invariant under a loxodromic flow in Siegel's model.}\label{cyl_inv_lox}
\end{figure}

\subsubsection{Unipotent flows}
Consider now a unipotent element of $\su21$ in Siegel's model. Perhaps after a conjugation, we can suppose that it is, for $(z,s) \in \mathbb{C} \times \mathbb{R}$

\[P_{z,s} = \begin{pmatrix}
1 & -\overline{z} & -\frac{1}{2}(|z|^2 + is) \\
0 & 1 & z \\
0 & 0 & 1 \\
\end{pmatrix} . \]

The series $\Log(P_{z,s})$ converges and we have 
\[\Log(P_{z,s}) = \begin{pmatrix}
0 & -\con{z} & -\frac{is}{2} \\
0 & 0 & z \\
0 & 0 & 0 \\
\end{pmatrix}\]

The flow of the associated vector field acts on $\dh2c$ by:
\[\phi_t^{\Log(P_{z,s})} \begin{bmatrix}
-\frac{1}{2}(|z'|^2+is') \\ z' \\ 1
\end{bmatrix}
=
\begin{bmatrix}
-\frac{1}{2}(|z'+tz|^2+i(s'+ts-2t\Im(\con{z}z'))) \\ z'+tz \\ 1
\end{bmatrix} \]

In coordinates $(z,s)\in \mathbb{C} \times \mathbb{R}$ the action is given by $(z',s') \mapsto (z'+tz,s'+ts-2t\Im(\con{z}z'))$.

In this coordinates, the orbits of the flow are straight lines, like those in figure \ref{flot_parab}.

\begin{figure}[H]
\center
 \includegraphics[width=5cm]{./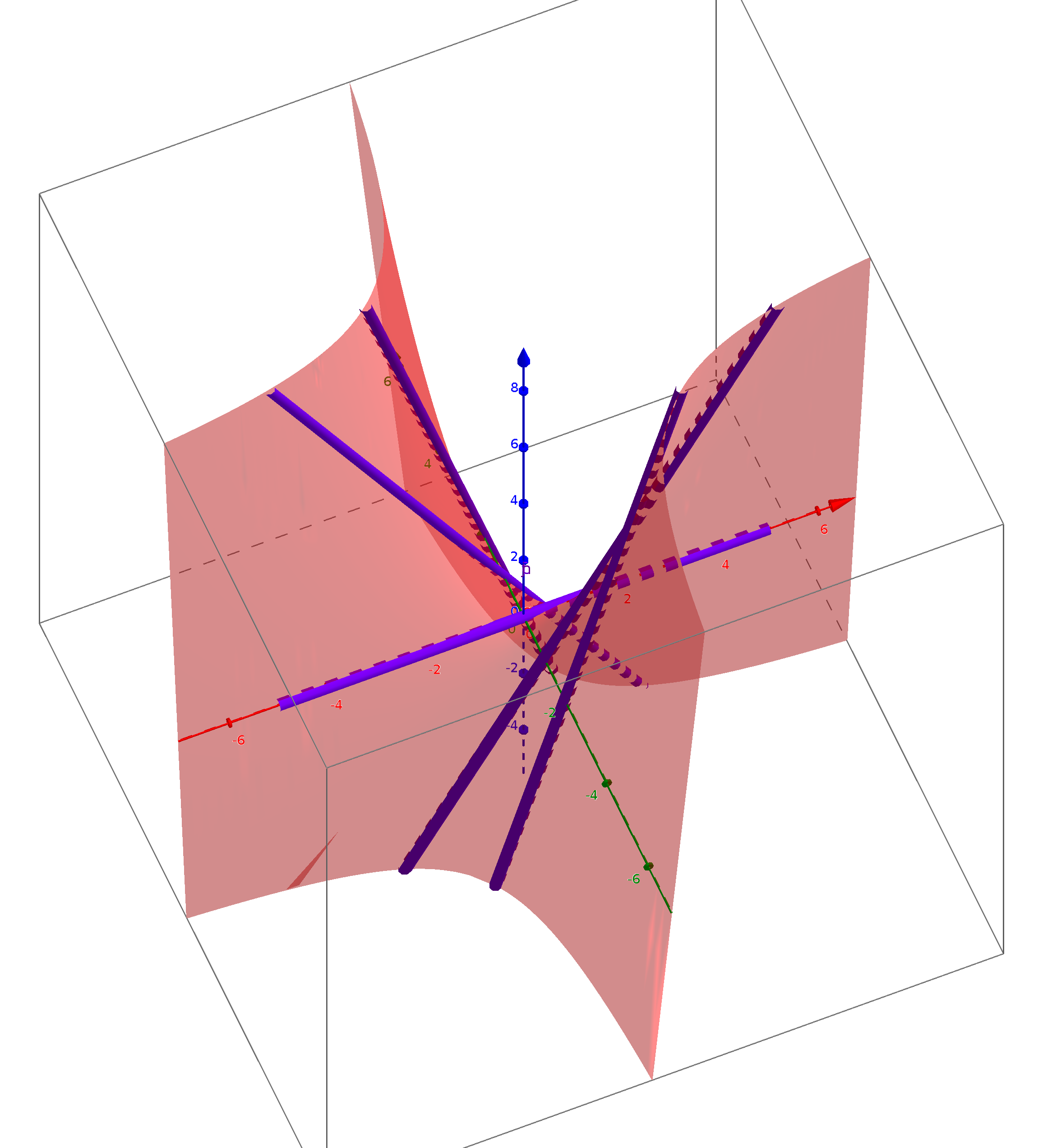}
 \caption{Some orbits of a unipotent flow in Siegel's model.}\label{flot_parab}
\end{figure}

\begin{rem}
 If $z=0$, then $[P_{z,s}]$ is called a \emph{vertical parabolic} element and all orbits of the flow are vertical lines. If not, $[P_{z,s}]$ is called a \emph{horizontal parabolic} and the orbits of the flow are lines with different slopes.
\end{rem}

\begin{rem}
 If $z \neq 0$, the centralizer of $P_{(z,s)}$ is $C(P_{(z,s)})= \left\{P_{(rz,s')} \mid (r,s') \in \mathbb{R}^2 \right\}$. The orbits of this subgroup in $\dh2c$ are the fixed point of $P_{(z,s)}$ and the subsets $S_r$ for $r\in \mathbb{R}$, defined by
 
 \[S_r = \left\{ \begin{bmatrix}
-\frac{1}{2}(|w|^2+is') \\ w \\ 1
\end{bmatrix} \in \dh2c \mid \Im(\frac{w}{z})=r  \right\}.\] 

The orbits $S_r$ are vertical planes in Siegel's model, all invariant under the action of $\phi_t$. We can see an example in figure \ref{nappe_inv_parab}.
\end{rem}

\begin{figure}[H]
\center
 \includegraphics[width=5cm]{./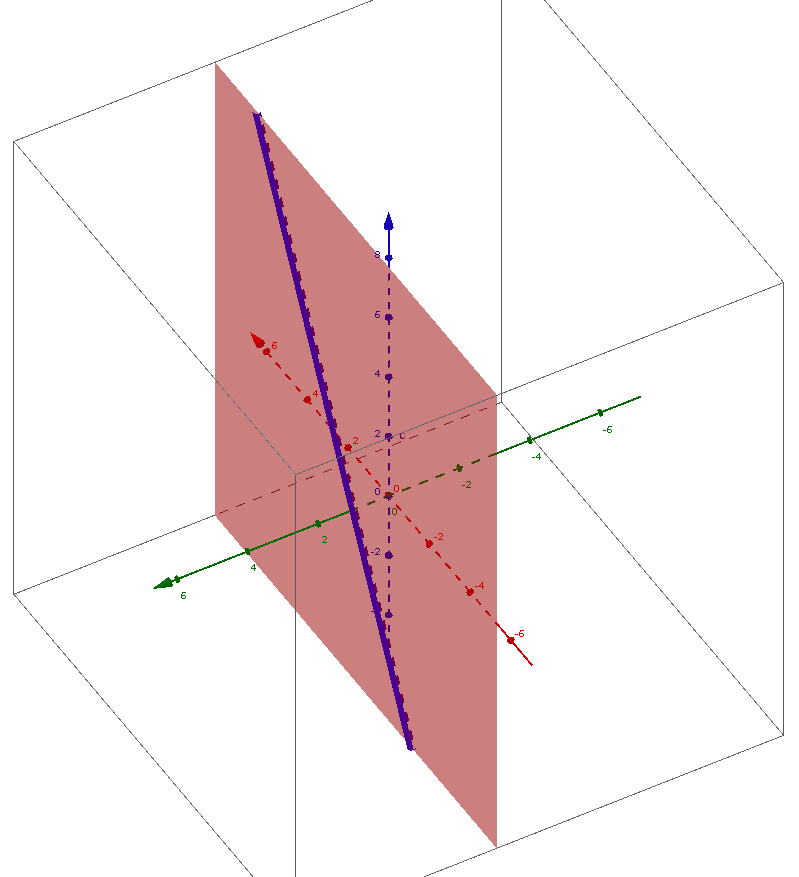}
 \caption{An invariant plane under a horizontal unipotent flow in Siegel's model.}\label{nappe_inv_parab}
\end{figure}

\subsubsection{Ellipto-parabolic flows}
 Finally, we consider nonregular ellipic elements, also called ellipto-parabolic. In Siegel's model, Perhaps after a conjugation, we can consider the element:
 
  \[E = e^{i\theta}\begin{pmatrix}
1 & 0 & -\frac{i}{2} \\
0 & e^{-3i\theta} & 0 \\
0 & 0 &  1 \\
\end{pmatrix}. \]

If $\theta$ is small enough, then the series $\Log(E)$ converges and we have  

 \[\Log(E) = \begin{pmatrix}
i\theta & 0 & -\frac{i}{2} \\
0 & -2i\theta & 0 \\
0 & 0 &  i\theta \\
\end{pmatrix}. \]

In coordinates $(z,s) \in \mathbb{C} \times \mathbb{R}$, the flow of the associated vector field acts by $(z,s) \mapsto (e^{-3i\theta t}z, s+t)$.

The orbits of the flow are the $\mathbb{C}$-circle invariant by $[E]$ and spirals turning around it, as in figure \ref{flot_ep_1}

\begin{figure}[H]
\center
 \includegraphics[width=5cm]{./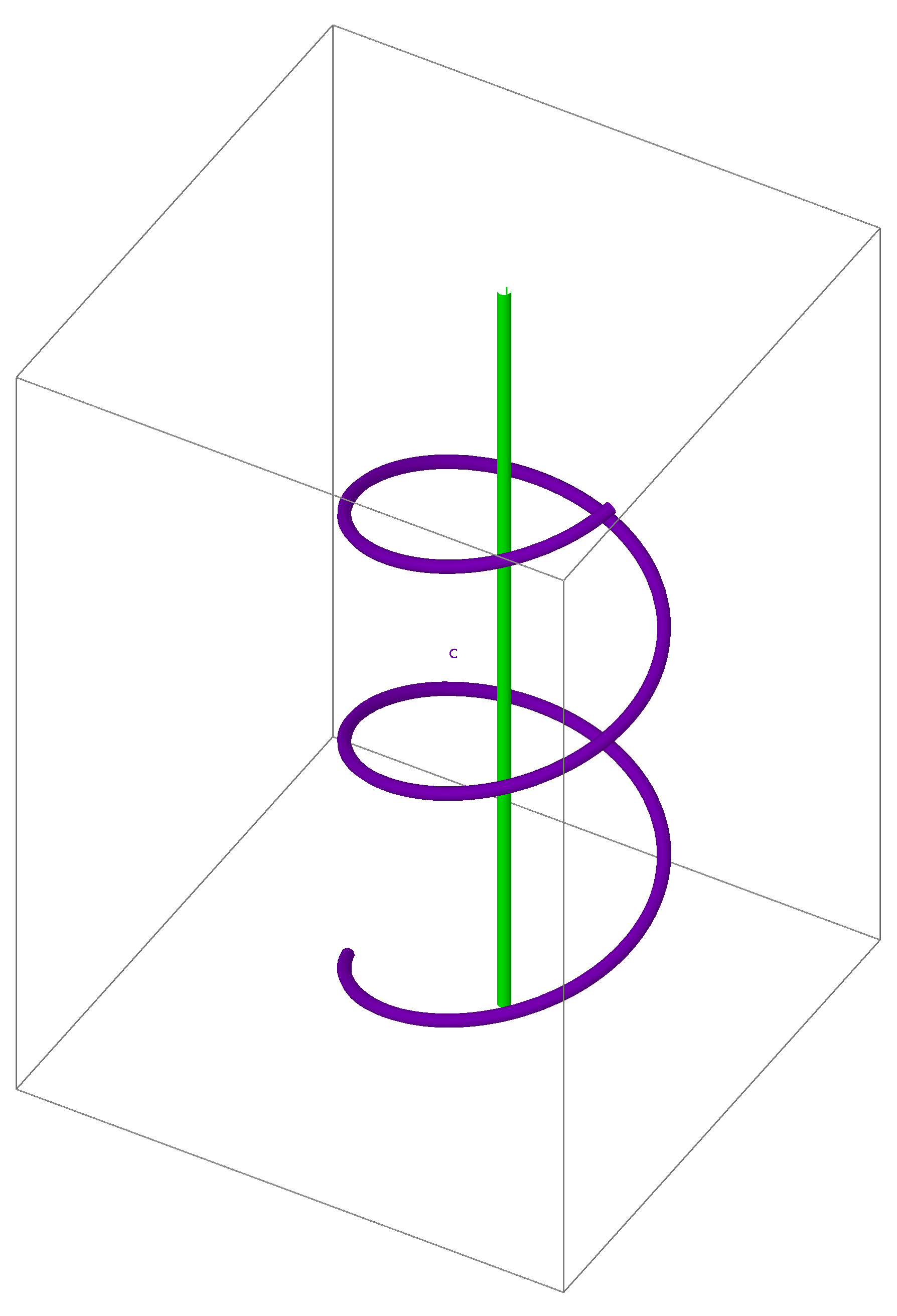}
 \caption{An orbit of an ellipto-parabolic flow in Siegel's model.}\label{flot_ep_1}
\end{figure} 

\begin{rem}
 The centralizer $C(E)$ of $E$ in $\su21$ is the set of elements of the form: 
 \[e^{i\varphi}\begin{pmatrix}
1 & 0 & -\frac{it}{2} \\
0 & e^{-3i\varphi} & 0 \\
0 & 0 &  1 \\
\end{pmatrix} \]
for $(t,\varphi) \in \mathbb{R}^2$. The orbits of this subgroup are the $\mathbb{C}$-circle invariant by $[E]$ and cylinders around it, as in figure \ref{nappe_inv_ep} 
\end{rem}

\begin{figure}[H]
\center
 \includegraphics[width=5cm]{./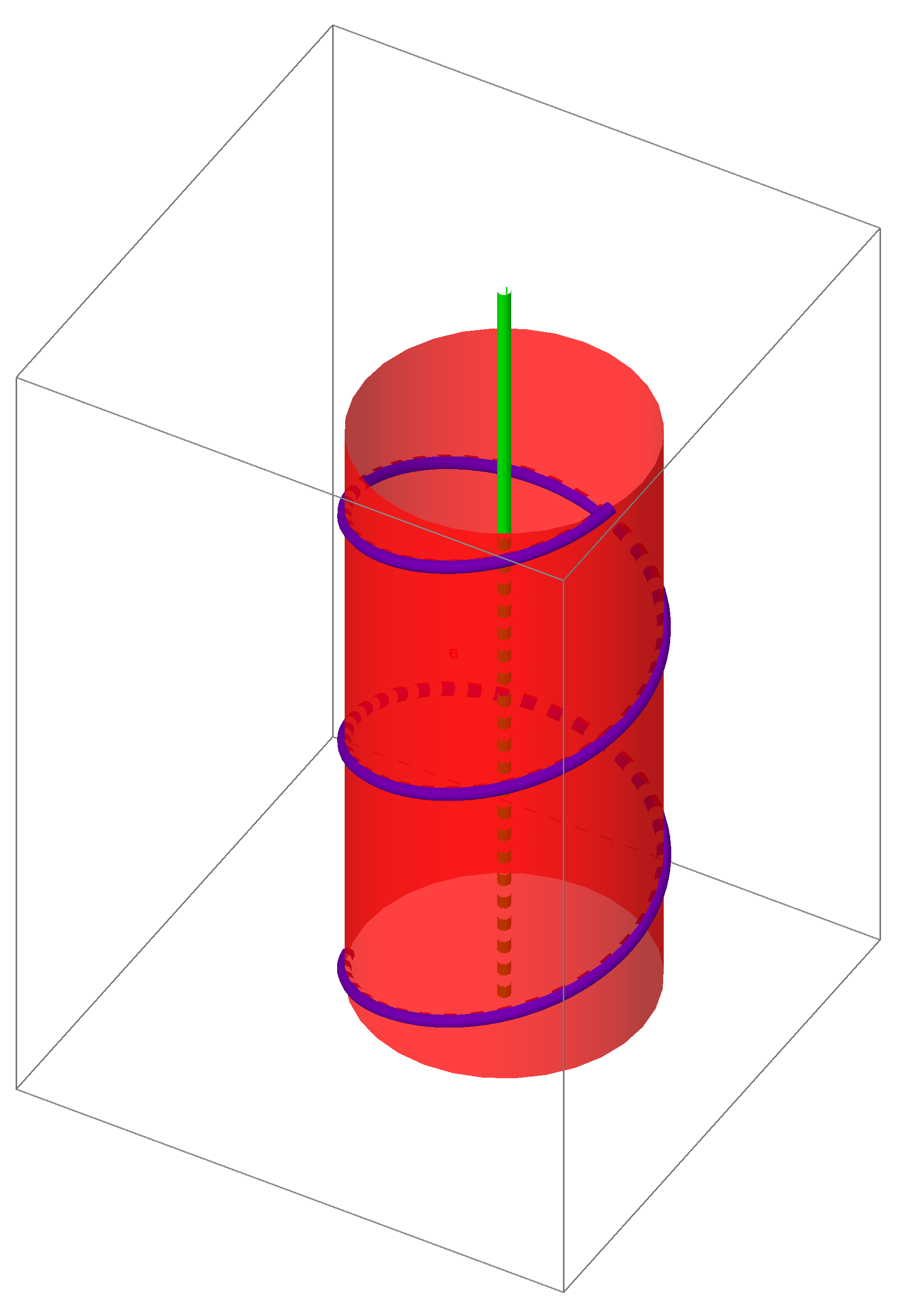}
 \caption{A surface invariant under the flow of an ellipto-parabolic element in Siegel's model.}\label{nappe_inv_ep}
\end{figure}

\subsection{Some remarks on the convergence of regular elements}

  The projection $\su21 \rightarrow \pu21$ is a covering of order 3 ; in order to study the convergence in $\pu21$ we can focus on the convergence in $\su21$.
 
 Let $(U_n)_{n \in \mathbb{N}}$ be a sequence of regular elements of $\su21$ converging to $U \in \su21 \setminus \mathbb{C}\mathrm{Id}$.
 
  If $U$ is regular, then the convergence is given by the convergence of eigenvectors and eigenvalues. We consider now the case where $U$ is not regular. The continuity of eigenvectors and eigenvalues gives the following lemma.

 \begin{lemme}
   Let $(U_n)_{n \in \mathbb{N}}$ be a sequence of regular elements of $\su21$ converging to $U \in \su21 \setminus \mathbb{C}\mathrm{Id}$. Let $(([u_n],\alpha_n),([v_n],\beta_n),([w_n],\gamma_n))$ be the eigenvectors and eigenvalues of $U_n$ in some order. Then, perhaps after changing the labeling, $(([u_n],\alpha_n),([v_n],\beta_n),([w_n],\gamma_n))$ converges to $(([u],\alpha),([v],\beta),([w],\gamma)) $ in $(\mathbb{CP}^2 \times \mathbb{C})^3$, where $([u],\alpha),([v],\beta),([w],\gamma)$ are (possibly equal) eigenvectors and eigenvalues of $U$. 
 \end{lemme} 
 
 Consider the case when $U$ is horizontal parabolic. Then, $U$ has a unique fixed point $[p]\in \cp2$, which is in $\dh2c$, and its eigenvalues can be chosen all equal to 1.
 Using the above lemma, we deduce that  $(\alpha_n,\beta_n,\gamma_n) \rightarrow (1,1,1)$ and $([u_n],[v_n],[w_n]) \rightarrow ([p],[p],[p])$. From a geometric point of view on $\h2c \cup \dh2c$ we make the two following remarks:
 
 \begin{rem}\label{rem_axes_lox_loin}
  If the $U_n$ are loxodromic of axes $l_n$ then the $l_n$ leave every compact of $\h2c \cup \dh2c \setminus \{[p]\}$.
 \end{rem}
 
 \begin{rem}\label{rem_axes_ell_loin}
  If the $U_n$ are elliptic, they have each two invariant complex geodesics $l_n^{(1)}$ and $l_n^{(2)}$ (the polar lines $[v_n]^{\perp}$ and $[w_n]^{\perp}$ if $[u_n]$ is the fixed point of $U_n$ in $\h2c$). Then the $l_n^{(i)}$ leave every compact of $\h2c \cup \dh2c \setminus \{[p]\}$.
 \end{rem}
 
 These two remarks will be crucial when understanding the geometry of deformations of \CR {} structures by considering a developing map.

\section{Regular surgeries}\label{section_surgeries}

\subsection{The Ehresmann-Thurston principle}
 We are going to study \CR {} structures on a 3-manifold $M$. Let's begin by recalling the formalism of $(G,X)$-structures, that will give us the language to use. In the definitions, $X$ will be a smooth connected manifold and $G$ a subgroup of the diffeomorphisms of $X$ acting transitively and analytically on $X$. We will focus on the case where $X = \dh2c$ and $G= \pu21$.

\begin{defn}
 A \emph{$(G,X)$-structure} on a manifold $M$ is a pair $(\Dev, \rho)$, up to isotopy, of a local diffeomorphism $\Dev : \widetilde{M} \rightarrow X$ and a group homomorphism $\rho : \pi_1(M) \rightarrow G$ such that for all $x \in \widetilde{M}$ and all $g \in \pi_1(M)$ we have $\Dev(g\cdot x)=\rho(g)\cdot \Dev(x)$ for the group actions of $\pi_1(M)$ on $\widetilde{M}$ and of $G$ on $X$.
 
 We say that $\Dev$ is the \emph{developing map} of the structure and $\rho$ its \emph{holonomy}.
\end{defn}

\begin{rem}
 We identify two structures if they are $G$-equivalent, i.e. if there is a $g \in G$ such that the developing maps $\Dev_1$ and $\Dev_2$ satisfy $\Dev_2=g\circ \Dev_1$. In this case, the holonomy representations are conjugate and satisfy $\rho_2 = g \rho_1 g^{-1}$.
\end{rem}

 The definition we just gave is not the usual one. We make then the following remark :

\begin{rem}
 This definition is equivalent to the usual definition of a $(G,X)$-structure as an atlas of charts of $M$ taking values in $X$ and whose transition maps are given by elements of $G$. A couple $(\Dev,\rho)$ immediately gives such an atlas, but the construction of $(\Dev,\rho)$ from an atlas requires more work. See for example Thurston's notes \cite{gt3m}. Nevertheless, we will use both definitions: the first in order to deform a structure, and the second to construct a new one.

\end{rem}

 We will use sometimes manifolds with boundary, but the definition of $(G,X)$-structure easily extends to this case. From now on, we consider a compact 3 dimensional manifold $M$ with (possibly many) torus boundaries. We are going to study \CR {} structures on $M$, where the model space $X$ is $\dh2c$ and the group $G$ is $\pu21$.

\begin{defn}
 A \emph{\CR {} structure} is a $(\pu21,\dh2c)$-structure.
\end{defn}

 In order to deform the structure using the Ehresmann-Thurston principle that we state below, the essential object are the representations of $\pi_1(M)$ taking values in $\pu21$.

\begin{notat}
Let $\mathcal{R}(\pi_1(M),G)$ be the set of representations of $\pi_1(M)$ taking values in $G$, endowed with the topology of pointwise convergence.
\end{notat}

 We are going to work with deformations of some structures. In order to state the results on a deformation, we will need to be "far enough from the boundary" or "close to the boundary". We are going to consider the union of $M$ with a thickening of its boundary to be able to state the results precisely.

\begin{notat}
If $s \in \mathbb{R}^+$, denote $M_{[0,s[}$ the union of $M$ with a thickening of its boundary. Thus, $M_{[0,s[} =( M \cup (\partial M \times [0,s[) )/ \sim$ where we identify $\partial M $ to $\partial M \times \{0\}$. We consider those manifolds as included into eachother, in such a way that if $s_1\leq s_2$, then $M_{[0,s_1[} \subset M_{[0,s_2[}$
\end{notat}

\begin{rem}
 The manifolds $M_{[0,s[}$ are all homeomorphic to the interior of $M$. We use these cuts in order to get a suitable convergence "far enough" from the boundary of  $M$ for geometric structures.
\end{rem}

Let's state the Ehresmann-Thurston principle, which says that we only need to deform in $\mathcal{R}(\pi_1(M),G)$ the holonomy of a $(G,X)$-structure to have a deformation of the structure itself. A proof can be found in the paper of Bergeron and Gelander \cite{bergeron_gelander} or in the survey of Goldman \cite{goldman_manifolds}.

\begin{thm}[Ehresmann-Thurston principle]
 Suppose $M_{[0,1[}$ has a $(G,X)$-structure of holonomy $\rho_0$. For all $\epsilon > 0$, if $\rho \in \mathcal{R}(\pi_1(M),G)$ is a deformation close enough from $\rho_0$, then there is a $(G,X)$-structure on $M_{[0,1-\epsilon[}$ of holonomy $\rho$ and close to the first structure on $M_{[0,1-\epsilon[}$ in the $\mathcal{C}^1$ topology.
\end{thm}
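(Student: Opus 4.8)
The plan is to reconstruct a $\rho$-equivariant developing map from the old one by perturbing its transition data inside the Lie group $G$, exploiting the compactness of the relevant piece of $M$ (the proof follows Thurston's original argument; see the references cited above).

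First I would reduce to a compact piece. Since $\overline{M_{[0,1-\epsilon/2[}}$ is compact and contained in the interior of the open manifold $M_{[0,1[}$ carrying the structure $(\Dev_0,\rho_0)$, fix a compact connected submanifold-with-boundary $N$ with $\overline{M_{[0,1-\epsilon/2[}} \subset \mathrm{int}(N) \subset N \subset M_{[0,1[}$ and with $\pi_1(N) \to \pi_1(M)$ surjective (one can take a deformation retract, since all the $M_{[0,s[}$ are homeomorphic to the interior of $M$); work with $\widetilde N \subset \widetilde{M_{[0,1[}}$ and $\Dev_0$ restricted. Choose a finite cover $\{U_i\}_{i\in I}$ of $N$ by open sets lifting homeomorphically to $\widetilde N$, fix such lifts $\widetilde U_i$ on each of which $\Dev_0$ is a diffeomorphism onto an open subset of $X$, pick a shrinking $\{V_i\}$ with $\overline{V_i}\subset U_i$ still covering $N$, and a smooth partition of unity $(\chi_i)$ subordinate to $\{V_i\}$.

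Next, encode the structure by transition data. Whenever $\widetilde U_i \cap \gamma \widetilde U_j \neq \emptyset$ for $\gamma \in \pi_1(M)$, equivariance of $\Dev_0$ forces $\Dev_0 = \rho_0(\gamma) \circ \Dev_0 \circ \gamma^{-1}$ there, so the change of chart between $\widetilde U_i$ and $\gamma\widetilde U_j$ is the locally constant element $\rho_0(\gamma)\in G$. By compactness of $N$ only finitely many $\gamma$ occur, giving a finite set $\Gamma_0\subset\pi_1(M)$; and since $\pi_1(M)$ is finitely generated, "$\rho$ close to $\rho_0$" in the topology of pointwise convergence makes $\rho(\gamma)\rho_0(\gamma)^{-1}$ arbitrarily close to $\mathrm{id}\in G$ for every $\gamma\in\Gamma_0$. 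For such $\rho$, the naive charts $\Dev_0|_{\widetilde U_i}$ fail to glue $\rho$-equivariantly only by these near-identity elements of $G$. I would then correct this by interpolating these elements in exponential coordinates via the partition of unity $(\chi_i)$, inductively over a fixed ordering of $I$, modifying the chart maps on the $V_i$ by small elements of $G$ so the corrected maps agree on overlaps after applying $\rho$, and finally extending from $N$ to all of $\widetilde N$ by the rule $\Dev(\gamma x)=\rho(\gamma)\Dev(x)$ — which is well defined precisely because the interpolation was done consistently with the identifications $\widetilde U_i \sim \gamma\widetilde U_j$. The outcome is a smooth $\rho$-equivariant map $\Dev:\widetilde N\to X$ that is $\mathcal C^1$-close to $\Dev_0$, all corrections being controlled by $\sup_{\gamma\in\Gamma_0}\|\rho(\gamma)-\rho_0(\gamma)\|$.

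Finally, a $\mathcal C^1$ map from a compact manifold that is $\mathcal C^1$-close to the local diffeomorphism $\Dev_0$ is again a local diffeomorphism (being a submersion is open, and in equal dimension a submersion is a local diffeomorphism), so $(\Dev,\rho)$ restricted to $\widetilde{M_{[0,1-\epsilon[}}$ provides the desired $(G,X)$-structure. The main obstacle is the interpolation step: one must interpolate inside the group $G$ rather than naively average maps into $X$ (which carries no affine structure), and one must order the inductive gluing over the nerve of the cover so that each correction is supported on a single overlap and does not spoil identifications already arranged; controlling that the corrections stay small enough to preserve both the equivariance bookkeeping and the local-diffeomorphism property — and observing that only the finite set $\Gamma_0$ of relevant deck transformations matters, so pointwise closeness of $\rho$ genuinely suffices — is the heart of the argument.
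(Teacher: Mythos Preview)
The paper does not actually prove this theorem: it states the Ehresmann--Thurston principle and refers the reader to Bergeron--Gelander and to Goldman's survey for a proof. So there is no in-paper argument to compare your proposal against.

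Your sketch follows the standard Thurston-style proof found in those references: restrict to a compact piece, take a finite cover by chart neighborhoods with chosen lifts, observe that only finitely many deck transformations are relevant so pointwise closeness of $\rho$ to $\rho_0$ suffices, correct the charts by near-identity elements of $G$ using a partition of unity, and conclude that the resulting $\rho$-equivariant map is still a local diffeomorphism by $\mathcal{C}^1$-openness. This is the right outline, and you correctly identify the delicate point (interpolating in $G$ rather than in $X$, and ordering the inductive correction so earlier identifications are not destroyed). As written it is a sketch rather than a complete proof --- the interpolation step would need to be carried out carefully, for instance by working with sections of the flat $(G,X)$-bundle as in Goldman's treatment, or by the explicit averaging in the Lie algebra as in Bergeron--Gelander --- but the strategy is sound and matches the cited literature.
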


%The result is in fact a little more precise : we have a local parametrization.
%\begin{rem}
% $\mathcal{R}(\pi_1(M),\Gamma)$ parametrizes the $(\Gamma,X)$-structures on $M_{[0,1-\epsilon[}$ in a neighborhood of $\rho_0$. See for example the chapter written by Canary, Epstein and Green in \cite{canary_epstein_green} and the PhD thesis of Lok \cite{lok}.
%\end{rem}

\subsection{Surgeries}
 As in the real hyperbolic case, we consider Dehn surgeries of $M$, which are, from a topological point of view, a gluing of solid tori on the torus boundaries of $M$. We attempt to extend a \CR {} structure on $M$ to one of its surgeries. We show a result very similar to the one showed by Schwartz in \cite{schwartz}, but with some differences. On the one hand, our hypotheses are weaker than Schwartz's and we obtain a geometric structure. On the other hand we do not know if the structure is obtained as a quotient of an open set of $\dh2c$ by the action of a subgroup of $\pu21$.

\subsubsection{Thickenings and lifts}

Let's begin by fixing notation for a torus boundary component, one of its lifts and the associated peripheral holonomy. We denote $\widetilde{M}$ the universal cover of $M$ and $\pi : \widetilde{M} \rightarrow M$ its projection. We state all results for a single torus boundary component in order to avoid heavy notation, but the same statements work for several boundary components.

\begin{notat}
Let $T$ be a fixed torus boundary of $M$. For $s\in [0,1[$, let $T_s = T \times \{s\} \subset M_{[0,1[}$, and, for an interval $I \subset [0,1[$, let $T_I= \cup_{s\in I} T_s = T \times I \subset M_{[0,1[}$. Let $\widetilde{T}_{[0,1[}$  denote some connected component of $\pi^{-1}(T_{[0,1[}) \subset \widetilde{M}_{[0,1[}$ : it is a universal cover of $T_{[0,1[}$ embedded in $\widetilde{M}_{[0,1[}$. Finally, for $s \in [0,1[$, set $\widetilde{T}_s = \pi^{-1}(T_s) \cap \widetilde{T}_{[0,1[}$ and, for an interval $I \subset [0,1[$, $\widetilde{T}_I= \cup_{s\in I} \widetilde{T}_s$.

\end{notat}

We make some remarks on the choices made by taking these notations:

\begin{rem}
For all $s\in [0,1[$, $\widetilde{T}_s$ is homeomorphic to $\mathbb{R}^2$. Furthermore, $\widetilde{T}_I$ is homeomorphic to $\mathbb{R}^2 \times I$.  
\end{rem}

\begin{rem}
 The choice of $\widetilde{T}_{[0,1[}$ fixes an injection of the fundamental group of $T$ into the fundamental group of $M$, by identifying $\pi_1(T)$ to the stabilizer of $\widetilde{T}_{[0,1[}$ for the action of $\pi_1(M)$ on $\widetilde{M}_{[0,1[}$.
\end{rem}

\begin{notat}
 With the fixed injection of $\pi_1(T)$ into $\pi_1(M)$, by restricting the holonomy $\rho$ of a $(G,X)$-structure we have a \emph{peripheral holonomy} $h_{\rho} : \pi_1(T) \rightarrow G$.
\end{notat}

\begin{notat}
 We denote by $\mathcal{R}_1(\pi_1(M),G) \subset \mathcal{R}(\pi_1(M),G)$ the set of representations $\rho$ such that $h_\rho$ is generated by a single element. When $\rho \in \mathcal{R}_1(\pi_1(M),\pu21)$ has $[U]\in \pu21$ as a preferred generator for its image, we write $\phi_t^\rho$ for $\phi_t^{\Log([U])}$.
\end{notat}

 \subsubsection{Horotubes}

 We are going to use the definitions related to horotubes given by Schwartz in \cite{schwartz}:
 
\begin{defn}
 Let $[P] \in \pu21$ be a parabolic element of fixed point $p\in \dh2c$. A \emph{$[P]$-horotube} is an open set $H$ of $\dh2c \setminus \{p\}$, invariant under $[P]$ and such that the complement of $H/\langle [P] \rangle$ in $(\dh2c \setminus \{p\})/\langle [P] \rangle$ is compact. 
\end{defn} 
 
 In order to work with more regular objects, we often ask horotubes to be \emph{nice}:
 
 \begin{defn}
  A $[P]$-horotube $H$ is \emph{nice} if $\partial H$ is a smooth cylinder invariant by the flow $\phi_t^{\Log([P])}$.
 \end{defn}

 \begin{rem}
  If $H$ is a nice $[P]$-horotube, then $\partial H$ is the orbit for $\phi_t^{\Log([P])}$ of an embedded circle of $\dh2c \setminus \{p\}$. We can see an example in figure \ref{horotube56}.
 \end{rem}
 
\begin{figure}[H]
\center
 \includegraphics[width=10cm]{./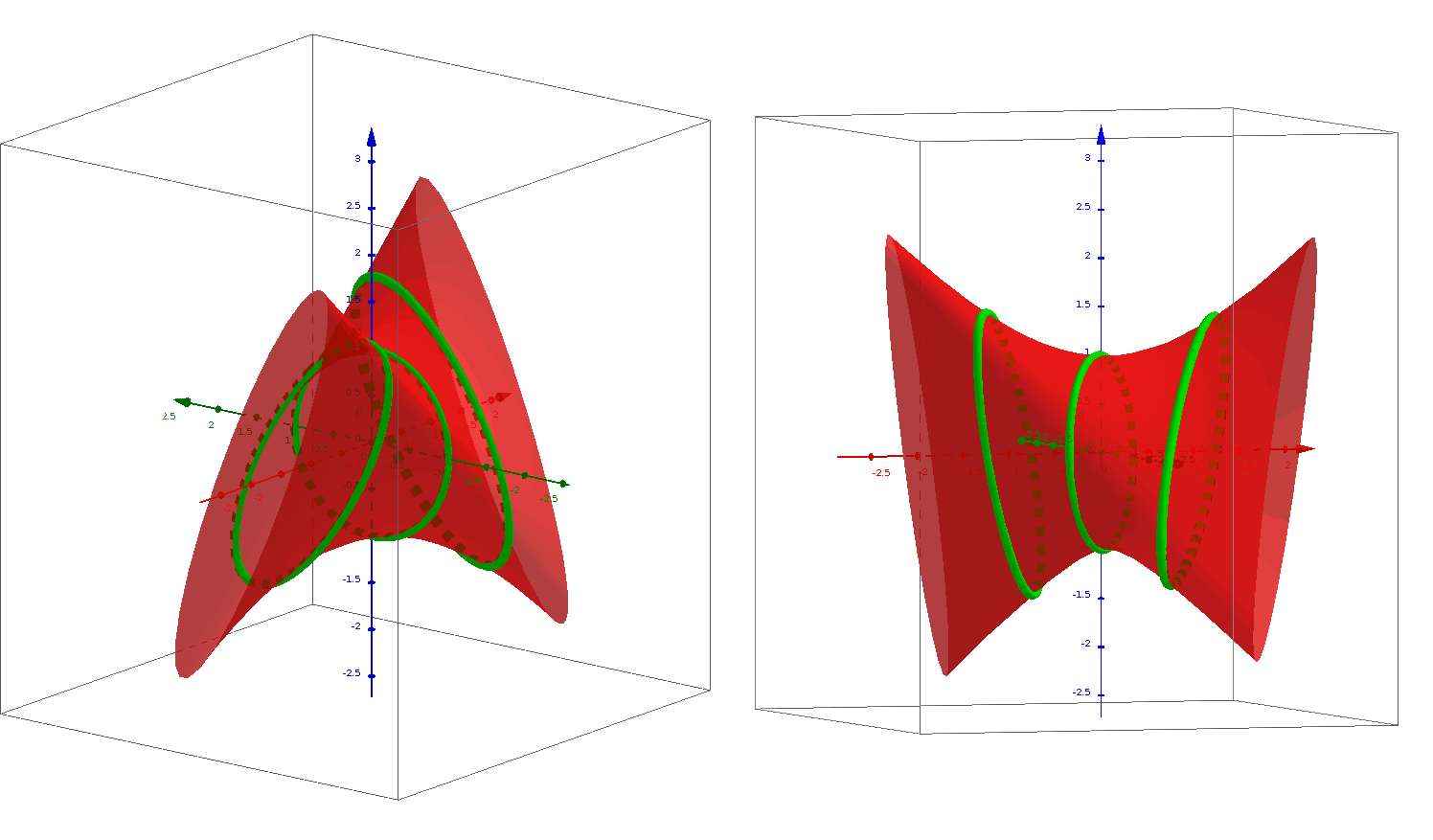}
 \caption{The boundary of a nice horotube in Siegel's model. The horotube is outside the red surface.}\label{horotube56}
\end{figure} 

 The following remark says that, shrinking the horotube if necessary, we may assume it is nice.
 
 \begin{lemme}[2.7 of \cite{schwartz}] \label{rq_horotube_gentil}
  Let $H$ be a $[P]$-horotube. Then, there is a nice $[P]$-horotube $H'$ such that $H' \subset H$ and $(H \setminus H')/\langle [P] \rangle$ is of compact closure in $(\dh2c \setminus \{p\}) / \langle [P] \rangle$.
 \end{lemme}

From now, we suppose that $M_{[0,1[}$ has a \CR { }  structure of developing map $\Dev_0$ and holonomy $\rho_0$. We also make two more hypotheses:

\begin{enumerate}
  \item The image of the peripheral holonomy $h_{\rho_0}$ is unipotent of rank 1 and generated by an element $[U_0] \in \pu21$.
  \item There is $s \in [0,1[$ such that $\Dev_0(\widetilde{T}_{[s,1[})$ is a $[U_0]$-horotube.
\end{enumerate}

\subsubsection{Marking of $\pi_1(T)$} \label{sect_marquage}
 We are going to fix a marking of $\pi_1(T)$ naturally deduced from the structure given by $\Dev_0$ and $\rho_0$. This marking will be useful to identify the Dehn surgeries obtained when deforming the structure. It is essentially the same marking as the one given in chapter 8 of \cite{schwartz};  its definition uses the two hypotheses given above.
 
 \begin{notat}
  Fix $s'\in [s,1[$ and $x_0 \in \Dev_0(T_{s'})$.
  Let $l$ be the loop given by the projection of $t \mapsto \phi_t^{\rho_0}(x_0)$. As $h_{\rho_0}(l)=[U_0]$ generates the image of $h_{\rho_0}$ and since a unipotent subgroup of $\pu21$ has no torsion, $l$ is a primitive element of $\pi_1(T)$. 
 \end{notat} 
 
 \begin{notat}
  Since $h_{\rho_0}$ is unipotent of rank one and a unipotent subgroup of $\pu21$ has no torsion, its kernel is generated by a primitive element $m$. We orient $m$ in such a way that $(l,m)$ is a direct basis of $\pi_1(T)$ (for the orientation given by the inside normal in the horotube).
 \end{notat}

 \begin{rem}
  The definition of $l$ and $m$ does not depend on the choice of $s'$ nor of $x_0$. Nevertheless, we made a choice for orientations. The one for $m$ is explicit, but the orientation of $l$ is given by the choice of $[U_0]$ or $[U_0]^{-1}$ as a generator of the image of $h_{\rho_0}$.
 \end{rem}

 \begin{rem}
  In \cite{schwartz}, Schwartz gives a "canonical" choice for the orientations of $l$ and $m$ (denoted $\beta$ and $\alpha$). It is almost the same choice as the one made above, but he has a preferred choice for $[U_0]$. Note that the marking $(l,m)$ given here might not be the usual one. If we have another marking of $\pi_1(T)$, for example when $M$ is a knot complement, changing markings can be done easily when $\rho_0$ is known explicitly.
 \end{rem}

\begin{figure}[H]
\center
 \includegraphics[width=5cm]{./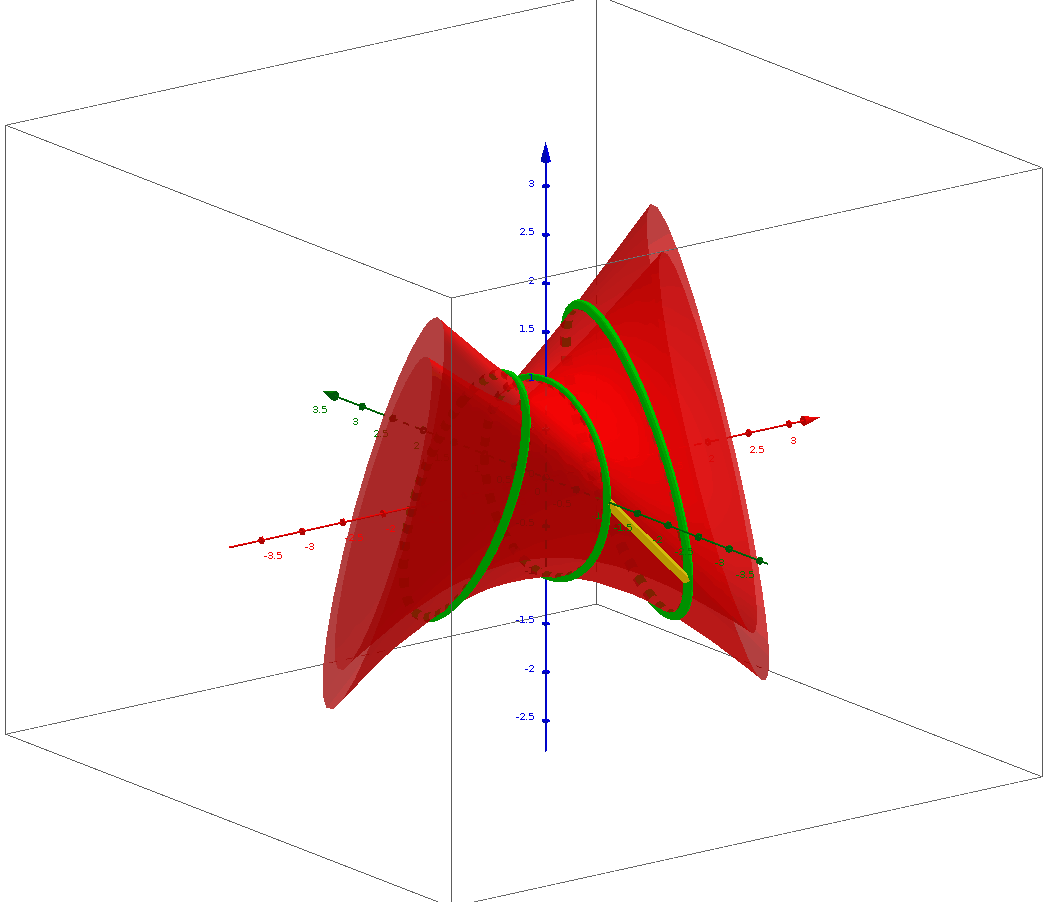}
 \caption{The curve $m$ (in green) and the curve $l$ (in yellow) in the image of $\Dev_0(\widetilde{T}_{s'})$.}\label{horotube_marque}
\end{figure} 

\begin{defn}
 For two relatively prime integers $p,q$, we denote by $M^{(p,q)}$ the manifold obtained by gluing a solid torus $D^2 \times S^1$ on the boundary $T$ of $M$ such that the loop $pl+qm$ of $T$ becomes trivial in $D^2 \times S^1$. We refer to it as the \emph{Dehn surgery of $M$ of type $(p,q)$} or of \emph{slope $\frac{p}{q}$}.
\end{defn}

 In the real hyperbolic case, deforming the complete hyperbolic structure on $M$ gives structures on all but a finite number of Dehn surgeries $M^{(p,q)}$ of $M$, as it is shown in Thurston's notes \cite{gt3m}. The main idea to prove it is to deform the structure "far" from the cusp, cutting by $T$, look at the developing map near the boundary $T$ and then notice that a solid torus can be glued to this boundary. What follows, stated in the \CR {} case, is inspired by this technique. The deformation "far" from the cusp rises a developing map near $T$, and the manifolds that can be glued are solid tori only in some cases.

\subsubsection{A surgery theorem}
 We are now able to state a \CR {} surgery theorem. It says that in a neighborhood of the structure  $(\Dev_0,\rho_0)$, under some discreteness conditions, \CR {} structures on $M$ come from structures on Dehn surgeries of $M$, and in some cases another kind of surgery.

\begin{thm}\label{thm_chirurgie}
 Let $M$ be a three dimensional compact manifold with torus boundary components. Let $T$ be one boundary torus of $M$. Suppose that there is a \CR {} structure  $(\Dev_0,\rho_0)$ on $M_{[0,1[}$ such that:
 \begin{enumerate}
  \item The peripheral holonomy $h_{\rho_0}$ corresponding to $T$ is unipotent of rank 1 and generated by an element $[U_0] \in \pu21$.
  \item There is $s \in [0,1[$ such that $\Dev_0(\widetilde{T}_{[s,1[})$ is a $[U_0]$-horotube.
\end{enumerate}  
 Then there is an open set $\Omega$ of $\mathcal{R}_1(\pi_1(M), \pu21)$ containing $\rho_0$ such that, for all $\rho \in \Omega$ for which the image of $h_\rho$ is generated by an element $[U] \in \pu21$, there is a  \CR {} structure  on $M$ with holonomy $\rho$. Furthermore, for the marking $(l,m)$ of $\pi_1(T)$ described above,
 
 \begin{enumerate}
  \item If $[U]$ is loxodromic, the structure extends on a  \CR {} structure  on the Dehn surgery of $M$ of slope $(0,1)$.
  \item If $[U]$ is elliptic of type $(\frac{p}{n},\frac{\pm 1}{n})$, the structure extends on a  \CR {} structure  on the Dehn surgery of $M$ of slope  $(n, \pm p)$.
  \item If $[U]$ is elliptic of type $(\frac{p}{n},\frac{q}{n})$ with $|p|,|q|>1$, the structure extends on a  \CR {} structure on the gluing of $M$ with a compact manifold with torus boundary $V(p,q,n)$. Furthermore $V(p,q,n)$ is a torus knot complement in the lens space $L(n,\alpha)$ where $\alpha \equiv p^{-1}q \mod n$.
 \end{enumerate}
\end{thm}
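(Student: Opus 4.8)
The plan is to argue in three stages: first deform the structure via the Ehresmann--Thurston principle, then analyze the geometry of the developing map near $T$ by following the flow $\phi_t^{\rho}$ on the thickened boundary, and finally identify the manifold that can be glued in.

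First, I would invoke the Ehresmann--Thurston principle to obtain the open set $\Omega$: since $(\Dev_0,\rho_0)$ is a \CR{} structure on $M_{[0,1[}$, for $\rho$ close enough to $\rho_0$ there is a \CR{} structure $(\Dev_\rho,\rho)$ on $M_{[0,1-\epsilon[}$, $\mathcal{C}^1$-close to the original on that subset. Shrinking slightly, this already gives a \CR{} structure on $M$ itself with holonomy $\rho$; the content is all in extending across $T$. Fix $s'$ as in the marking, so that $\Dev_0(\widetilde T_{[s',1[})$ is contained in a nice $[U_0]$-horotube $H$ (using Lemma~\ref{rq_horotube_gentil} to replace $H$ by a nice one, shrinking $s'$ if needed). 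The key observation is that $\Dev_0$ restricted to $\widetilde T_{s'}$ is, up to the $\langle[U_0]\rangle$-action, an embedding of a torus whose image is an embedded torus transverse to the unipotent flow $\phi_t^{\rho_0}$, with the $l$-curve mapping to a flow orbit and the $m$-curve to a loop isotopic in $H$ to a circle on $\partial H$ around the missing point. By $\mathcal{C}^1$-closeness, for $\rho\in\Omega$ the image $\Dev_\rho(\widetilde T_{s'})$ is still an embedded surface, and $\Dev_\rho(\widetilde T_{[s',1-\epsilon[})$ sits inside a thin region swept by the flow $\phi_t^{\rho}$ of the (now non-parabolic) generator $[U]$.

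Second, I would use the explicit dynamics from Section~\ref{section_h2c}. When $[U]$ is loxodromic, Remark~\ref{rem_axes_lox_loin}-style reasoning shows the developing image near $T$ fills a neighborhood of an arc $C_+$ (or $C_-$) of the axis minus the attracting/repelling fixed points; the quotient of that neighborhood by $\langle[U]\rangle$ is a solid torus, and matching up the $l$-orbit with the core one reads off slope $(0,1)$. When $[U]$ is elliptic of type $(\tfrac pn,\tfrac qn)$, the flow $\phi_t^{\rho}$ preserves the tori $T_r$ and its orbits are $(p,q)$-torus knots; the developing image of $\widetilde T_{[s',1-\epsilon[}$ lies in (a neighborhood of) one such $T_r$, and the region ``outside'' — between $T_r$ and one of the invariant $\mathbb{C}$-circles $C_1$, $C_2$ — is an open solid-torus-like piece whose quotient by $\langle[U]\rangle$ is the manifold to be glued. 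The precise gluing instruction comes from tracking how $l$ (a flow orbit, hence a $(p,q)$-curve) and $m$ (the meridian of the horotube, which becomes a curve on $T_r$ bounding in the complementary region) sit relative to the Seifert framing: in the $(\tfrac pn,\tfrac{\pm1}{n})$ case the complementary solid torus has its meridian disk bounded by $nl\pm pm$, giving slope $(n,\pm p)$; in the general $|p|,|q|>1$ case the complement of a $(p,q)$-torus knot neighborhood inside the relevant lens space quotient is $V(p,q,n)$, with the lens space $L(n,\alpha)$, $\alpha\equiv p^{-1}q$, arising from the quotient of $\dh2c\setminus(C_1\cup C_2)$ by $\langle[U]\rangle$.

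The main obstacle I expect is the second stage: showing rigorously that the developing image near $T$ actually fills, and misses nothing of, the flow-invariant neighborhood one needs — i.e.\ that $\Dev_\rho$ maps $\widetilde T_{[s',1-\epsilon[}$ onto a region of the form $(\text{open neighborhood of }T_r)\setminus(\text{one side})$ in a way that is $\langle\rho(l)\rangle$-equivariant and descends to a collar of a torus in the quotient, so that the complementary solid piece glues on smoothly to yield a genuine $(G,X)$-structure rather than just a structure with some bad boundary behavior. This requires a quantitative statement: the $\mathcal{C}^1$-closeness must persist uniformly up to $s=1-\epsilon$, the flow orbit through a point of $\Dev_\rho(\widetilde T_{s'})$ must escape to one end of the complementary region (this is where the $T_r$-invariance and the limit behavior of the flows, as in the remarks on loxodromic and elliptic flows, do the work), and one must check the topological type of the quotient of the complementary region — a torus knot complement computation in a lens space — matches $V(p,q,n)$ with the asserted $L(n,\alpha)$. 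The parabolic-to-non-parabolic transition also needs care: one must choose lifts so that $\Log([U])$ makes sense and $\phi_1^{\Log([U])}=[U]$, which is exactly the content of the remark on elements close to unipotent.
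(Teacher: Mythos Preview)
Your plan is correct and follows essentially the same route as the paper: Ehresmann--Thurston to deform, then a flow-straightening near the boundary, then case-by-case identification of the glued piece. The obstacle you flag in the second stage is exactly what the paper isolates as its two technical lemmas: a \emph{Straightening Lemma} (after a $[U]$-equivariant isotopy one may assume $\Dev_\rho(t,\zeta,s)=\phi_t^\rho(\Dev_\rho(0,\zeta,s))$ on a compact slab $[s_1',s_2']$), and a \emph{non-linking lemma} (the invariant $\mathbb{C}$-circles of $[U]$ escape every compact of $\dh2c\setminus\{p_0\}$ as $[U]\to[U_0]$, hence are unlinked from the annulus $A=\Dev_\rho(\{0\}\times S^1\times[s_1',s_2'])$), after which the flow-orbit of $A$ genuinely separates $\dh2c$ (minus fixed points) and one quotients the outer component by $\langle[U]\rangle$.
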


\begin{rem}
 The existence of the structure on $M$ is a consequence of the Ehresmann-Turston principle. To extend the structure we need a local surgery result, similar to the one given by Shwartz in \cite{schwartz}, and which is given in section \ref{section_proof}.
\end{rem}

\begin{rem}
 If $[U]$ is parabolic, then the theorem still holds, but the \CR {} structure extends to a thickening of $M$, which is homeomorphic to $M$ itself.
\end{rem}

\section{Deformations of the Deraux-Falbel structure on the figure eight knot complement} \label{section_deraux-falbel}

We are going to apply theorem \ref{thm_chirurgie} in the case of the \CR {} structure on the figure eight knot given by Deraux and Falbel in \cite{falbel}. We will use some results of \cite{deraux_uniformizations}, where Deraux describes a Ford domain for the structure, and also some results of \cite{character_sl3c}, where Falbel, Guilloux, Koseleff, Rouillier and Thistlethwaite describe the irreducible components of the $\mathrm{SL}_3(\mathbb{C})$ character variety of the figure eight knot complement.

\begin{notat} 
 In the rest of this section, we denote by $M$ the figure eight knot complement.
\end{notat}

\subsection{The Deraux-Falbel structure}
Let's begin by recalling quickly the results of Deraux and Falbel from \cite{falbel}. In that paper, the fundamental group of $M$ is given by $$\pi_1(M)=\langle g_1, g_2, g_3 \mid g_2 = [g_3,g_1^{-1}] , g_1g_2 = g_2 g_3 \rangle.$$
 
 The authors construct a uniformizable \CR {} structure on $M$ with unipotent peripheral holonomy. The holonomy representation $\rho_0$ is given by
 
 \[\rho_0(g_1)= [G_1] = \begin{bmatrix}
1 & 1 & -\frac{1}{2}-\frac{\sqrt{7}}{2}i \\
0 & 1 & -1 \\
0 & 0 &  1 \\
\end{bmatrix} , \hspace{0.5cm}
\rho_0(g_3)= [G_3] = \begin{bmatrix}
1 & 0 & 0 \\
-1 & 1 & 0 \\
-\frac{1}{2}+\frac{\sqrt{7}}{2}i & 1 &  1 \\
\end{bmatrix} \]  

\begin{rem}
 This representation is in the component $R_2$ of the character variety of \cite{character_sl3c}. For the notation of section 5.2 of \cite{character_sl3c} we have $A=g_3$ and $B=g_1$. With this notation, the usual longitude-meridian pair $(l_0,m_0)$ of the knot complements satisfies \[m_0 = g_3 \text{ and } l_0= g_1^{-1}g_3g_1g_3^{-2}g_1g_3g_1^{-1}.\]

 Furthermore, we check easily that $\rho_0(m_0)^3 = \rho_0(l_0)$, so $\rho_0(3m_0-l_0)= \mathrm{Id}$.
 \end{rem} 
 
 \begin{notat}
 From now on, in order to have the same notation as in Deraux's paper \cite{deraux_uniformizations}, we consider the pair $(l_1,m_1)$ obtained by conjugation by $g_2$, so that $m_1= g_2 g_3 g_2^{-1}= g_1$.
 
 Let $l=m_0$ and $m=3m_0-l_0$. In this way, $m$ generates $\ker (\rho_0)$ and $\rho(l)$ generates $\Im (\rho_0)$: this is a marking as the one in section \ref{sect_marquage}.
 \end{notat}

 \subsection{Checking the hypotheses}
 Recall the hypotheses of theorem \ref{thm_chirurgie}:
 
  \begin{enumerate}
  \item The peripheral holonomy $h_{\rho_0}$ is unipotent with image generated by an element $[U_0] \in \pu21$.
  \item There exists $s \in [0,1[$ such that $\Dev_0(\widetilde{T}_{[s,1[})$ is a $[U_0]$-horotube.
\end{enumerate} 
 
 The first hypothesis is satisfied by the Deraux-Falbel structure: indeed, the peripheral holonomy is unipotent, its image is generated by $[G_1] = \rho_0(l)$ and $\rho_0(m)=[\mathrm{Id}]$.

 In order to check the second hypothesis, we use the results of \cite{deraux_uniformizations}. In that paper, Deraux finds with a different technique the Deraux-Falbel structure of \cite{falbel}. He considers a Ford domain $F$ in $\h2c$ for $\Gamma = \rho_0(\pi_1(M))$ (theorem 5.1) and then studies its boundary at infinity in $\dh2c$ (section 8). $M$ is then obtained as a quotient of a $G_1$-invariant domain $E= \partial_{\infty}F$, that is, in $\dh2c \simeq (\mathbb{C}\times \mathbb{R}) \cup \{\infty\}$, the exterior of a $G_1$-invariant cylinder $C$ embedded in $\mathbb{C}\times \mathbb{R}$ (proposition 8.1). $E$ is a $[G_1]$-horotube; so there exists $s\in [0,1[$ such that the image by the developing map of $\widetilde{T}_{[s,1[}$ is a $[G_1]$-horotube contained in $E$. Thus, the second hypothesis is satisfied.
 
 So, the conclusion of theorem  \ref{thm_chirurgie} holds. By changing coordinates in order to have the usual marking for the fundamental group of the boundary of $M$, we get:

\begin{prop}\label{prop_chirurgie_8}
  There is an open set $\Omega$ of $\mathcal{R}_1(\pi_1(M), \pu21)$ such that, for all $\rho \in \Omega$ such that the image of $h_\rho$ is generated by an element $[U] \in \pu21$, there exists a  \CR {} structure on $M$ of holonomy $\rho$. Furthermore, for the usual marking $(l_0,m_0)$ of $\pi_1(T)$,
 
 \begin{enumerate}
  \item If $[U]$ is loxodromic, the structure extends to a \CR {} structure on the Dehn surgery of type $(-1,3)$ of $M$.
  \item If $[U]$ is elliptic of type $(\frac{p}{n},\frac{\pm 1}{n})$, the structure extends to a \CR {} structure on the Dehn surgery of type $(-n,\pm p+3n)$ of $M$.
  \item If $[U]$ is elliptic of type $(\frac{p}{n},\frac{q}{n})$ with $|p|,|q|>1$, the structure extends to a \CR {} structure on the gluing of $M$ and a compact manifold with torus boundary $V(p,q,n)$ through their boundaries. Furthermore, $V(p,q,n)$ is the complement of a torus knot in the lens space $L(n,\alpha)$ where $\alpha \equiv p^{-1}q \mod n$.
 \end{enumerate}
\end{prop}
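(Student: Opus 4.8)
The plan is to apply Theorem \ref{thm_chirurgie} directly to the Deraux--Falbel structure $(\Dev_0,\rho_0)$ on $M$ and then rewrite its conclusion in the usual longitude--meridian marking. Concretely there are three things to do: verify the rank-one unipotent peripheral holonomy hypothesis, verify the horotube hypothesis, and carry out the change of coordinates.

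For the first hypothesis I would work directly with the matrices $G_1,G_3$. Since $m_0=g_3$ we have $\rho_0(m_0)=[G_3]$, and one checks the relation $\rho_0(m_0)^3=\rho_0(l_0)$; hence the restriction of $\rho_0$ to $\pi_1(T)=\langle l_0,m_0\rangle$ is unipotent, with infinite cyclic image $\langle[G_3]\rangle$ and kernel the primitive class $3m_0-l_0$. This is exactly a rank-one unipotent peripheral holonomy, generated by $[U_0]=[G_3]$ (which is conjugate to $[G_1]$), and it identifies the marking of section \ref{sect_marquage} with $l=m_0$, $m=3m_0-l_0$, up to the orientation ambiguities flagged there. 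The conjugation by $\rho_0(g_2)$ used in the excerpt to match the normalization of \cite{deraux_uniformizations} is irrelevant here: it only replaces the generator $[U_0]$ by a conjugate and changes neither $\pi_1(T)$ nor the surgery.

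For the second hypothesis I would invoke Deraux's Ford-domain analysis. In \cite{deraux_uniformizations} he produces a Ford domain $F\subset\h2c$ for $\Gamma=\rho_0(\pi_1(M))$ and shows (his Proposition 8.1) that $E:=\partial_{\infty}F\subset\dh2c$ is, in Siegel coordinates, the exterior of a $[G_1]$-invariant embedded cylinder $C\subset\mathbb{C}\times\mathbb{R}$, with $M$ recovered as the associated quotient. By the definition of a horotube, $E$ is then a $[G_1]$-horotube: the complement of $E/\langle[G_1]\rangle$ in $(\dh2c\setminus\{p\})/\langle[G_1]\rangle$ is the compact solid-torus-like piece cut off by $C$. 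Since the developing map identifies a collar of the cusp of $M$ with a $[G_1]$-invariant neighbourhood of the cusp point inside $E$, there is $s\in[0,1[$ for which $\Dev_0(\widetilde{T}_{[s,1[})$ is a $[G_1]$-horotube contained in $E$; if a nice horotube is wanted one shrinks $s$ and applies Lemma \ref{rq_horotube_gentil}. I expect this to be the main obstacle: one must match Deraux's Ford-domain description with the precise horotube formalism of Theorem \ref{thm_chirurgie}, being careful about the identification between the cusp cross-section of $M$ and the $[G_1]$-invariant region, and about which $[G_1]$-invariant cylinder actually bounds the developed collar.

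Once both hypotheses are checked, Theorem \ref{thm_chirurgie} applies verbatim and yields the open set $\Omega\ni\rho_0$ in $\mathcal{R}_1(\pi_1(M),\pu21)$, a \CR {} structure on $M$ for each $\rho\in\Omega$ with cyclic peripheral holonomy $\langle[U]\rangle$, and its extension over the surgery or gluing dictated by the dynamical type of $[U]$, all slopes being expressed in the marking $(l,m)$. It then remains to rewrite the slopes using the linear relation $l=m_0$, $m=3m_0-l_0$ (equivalently $m_0=l$, $l_0=3l-m$). For instance the class $m$ killed in the loxodromic case is $3m_0-l_0$, which gives the surgery of type $(-1,3)$; running the same substitution through the two elliptic cases of Theorem \ref{thm_chirurgie} gives the two elliptic cases of the proposition, while the manifold $V(p,q,n)$ glued along $\partial M$ in the last case — and its description as a torus-knot complement in the lens space $L(n,\alpha)$ with $\alpha\equiv p^{-1}q\mod n$ — is intrinsic to $V(p,q,n)$ and hence untouched by the change of marking. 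This yields Proposition \ref{prop_chirurgie_8}. (If $[U]$ is parabolic the structure only extends to a thickening of $M$, homeomorphic to $M$, as in the remark following Theorem \ref{thm_chirurgie}.)
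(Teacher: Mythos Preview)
Your proposal is correct and follows essentially the same route as the paper: verify hypothesis (1) by the relation $\rho_0(m_0)^3=\rho_0(l_0)$ to identify the marking $l=m_0$, $m=3m_0-l_0$; verify hypothesis (2) by quoting Deraux's Ford-domain description in \cite{deraux_uniformizations} (Proposition 8.1) to obtain the horotube; then apply Theorem \ref{thm_chirurgie} and convert the resulting slopes to the $(l_0,m_0)$ marking. The only cosmetic difference is that the paper phrases the generator as $[G_1]$ (after conjugating by $\rho_0(g_2)$) rather than $[G_3]$, which, as you note, is immaterial.
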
 

\begin{rem}
 If $[U]$ is parabolic, then the theorem still holds, but the \CR {} structure extends to a thickening of $M$. These structures are the structures given by Deraux in \cite{deraux_uniformizations}.
\end{rem}

\subsection{Deformations of the structure} 
 It remains to see that the open set $\Omega \subset \mathcal{R}_1(\pi_1(M))$ is not reduced to a point to get interesting conclusions. The representation $\rho_0$ is in the component $R_2$ of the $\mathrm{SL}_3(\mathbb{C})$-character variety described in \cite{character_sl3c} by Falbel, Guilloux, Koseleff, Rouillier and Thistlethwaite. In section 5 of that paper, the representations in $R_2$ taking values in $\su21$ are parametrized up to conjugacy, at least in a neighborhood of $\rho_0$, by a complex parameter $u=\mathrm{tr} (\rho(m_0))$. We denote by $G(u)= \rho(m_0)$ the corresponding matrix.

Setting $v=\con{u}$ and $\Delta = 4u^3 + 4 v^3 -u^2v^2 -16uv+16$, the parametrization is explicitly given by:

 \[ [G_3^{-1}(u)] = \rho(a)= \begin{bmatrix}
 \frac{v}{2} & 1 & -\frac{(1-i)(-16+8uv-2v^3-4\sqrt{\Delta})}{8u^2-6uv^2+v^4}  \\
  \frac{1}{8}(1+i)(-2u+v^2) & \frac{1}{4}(1+i)v & 1  \\
   \frac{1}{16}(8-4uv+v^3-2\sqrt{\Delta}) & \frac{1}{8}(-4u + v^2) & \frac{1}{4}(1-i)v  \\
 \end{bmatrix} \]
 
\[ [G_1^{-1}(u)] =
\rho(b)=\begin{bmatrix}
 \frac{v}{2} & i & \frac{(1+i)(-16+8uv-2v^3-4\sqrt{\Delta})}{8u^2-6uv^2+v^4}  \\
  -\frac{1}{8}(1+i)(-2u+v^2) & \frac{1}{4}(1-i)v & i  \\
   -\frac{1}{16}(8-4uv+v^3-2\sqrt{\Delta}) & -\frac{i}{8}(-4u + v^2) & \frac{1}{4}(1+i)v  \\
 \end{bmatrix}
 \] 
 
 Recall that, for these choice of generators the usual meridian $m_0$ is given by $m_0=a^{-1}$. The Hermitian form preserved by this representation is given by the matrix\footnote{We write here the opposite of the matrix $H$ of \cite{character_sl3c} in order to have signature $(2,1)$ and not $(1,2)$.}
 
\[H = \begin{pmatrix}
 \frac{1}{8}(\Delta -16)(\sqrt{\Delta}+|u|^2-4) & 0 & 0  \\
  0 & 16 - \Delta & 0  \\
   0 & 0 & 8(\sqrt{\Delta}+4)  \\
 \end{pmatrix}.\]

  Furthermore, in the whole component the relation $\rho(l_0)=\rho(m_0)^3$ holds, so $\mathcal{R}_1(\pi_1(M)) \cap R_2 = \mathcal{R}(\pi_1(M)) \cap R_2$. By projecting on $\pu21$, we can apply theorem \ref{thm_chirurgie} on an open set containing $3=\mathrm{tr}(\rho_0(m_0))$ with these parameters.

Figure \ref{curve_char}, taken from \cite{character_sl3c} shows an open set of $\mathbb{C}$ where we have representations. By noting $\mathrm{tr}(\rho_0(m_0)) = x+iy$, the component containing $\rho_0$ admits as parameter the regions with boundary the curve $\Delta(x,y)=0$ and containing the points $3$, $3\omega$ and $3\omega^2$, where
$\Delta(x,y)=-x^4 -y^4 - 2 x^2 y^2 - 24 x y^2 + 8 x^3 - 16 x^2 - 16 y^2 + 16$.

\begin{figure}[H]
\center
 \includegraphics[width=6cm]{./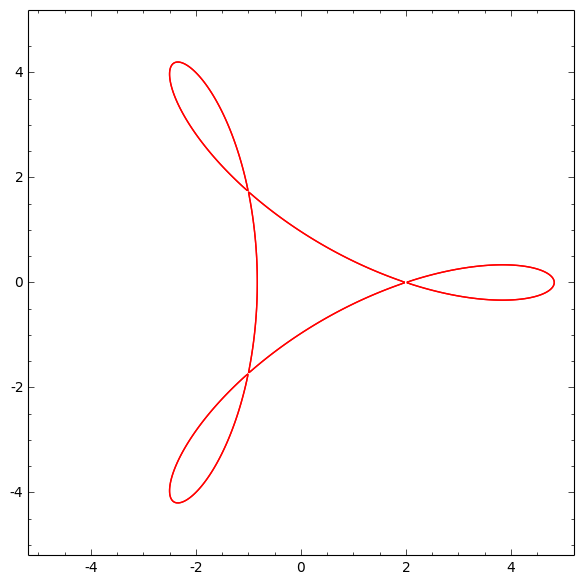}
 \caption{Domain parametrizing a component of the deformation variety near $\rho_0$.}\label{curve_char}
\end{figure}  

Now let us plot the curve $\mathcal{C}$ of traces of non-regular elements of $\su21$. It is given by the zeroes of the function $f(z)=|z|^4-8\Re (z^3) + 18|z|^2-27$ (see proposition \ref{fonction_goldman}). The curve separates regular elliptic and loxodromic elements. It has a singularity at the point $u=3$: thus a neighborhood of this point contains points corresponding to representations where the peripheral holonomy is loxodromic and points where it is regular elliptic. 

\begin{figure}[H]
\center
 \includegraphics[width=6cm]{./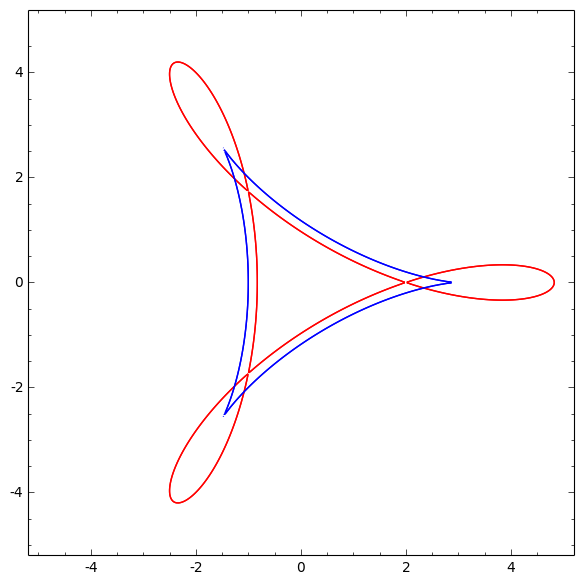}
 \includegraphics[width=6cm]{./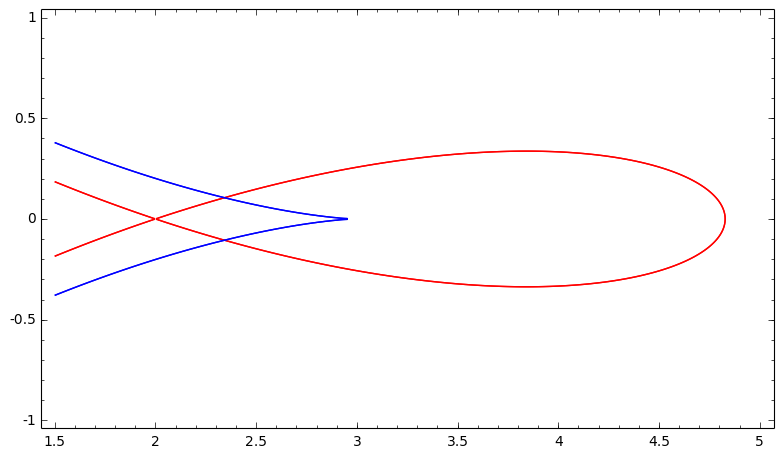}
 \caption{Curve of non-regular elements in a component of the deformation variety near $\rho_0$.}\label{curve_char2}
\end{figure}  

\begin{rem}
 The parabolic deformations of the Deraux-Falbel structure given by Deraux in \cite{deraux_uniformizations} correspond to the points of $\mathcal{C}$.
\end{rem}

We can therefore apply the first point of proposition \ref{prop_chirurgie_8} to the space of holonomy representations given by the parameters above. We obtain the following proposition:

\begin{prop}
 There exist infinitely many  \CR {} structures on the Dehn surgery of $M$ of type $(-1,3)$.
\end{prop}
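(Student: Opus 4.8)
The plan is to use the first point of Proposition \ref{prop_chirurgie_8}, which produces a \CR{} structure on the Dehn surgery of type $(-1,3)$ whenever $\rho \in \Omega$ has loxodromic peripheral holonomy. Since the topological type of the resulting surgery $M^{(-1,3)}$ does not depend on the particular $\rho$ (the loxodromic case always gives slope $(-1,3)$), it suffices to exhibit infinitely many pairwise non-equivalent \CR{} structures on this fixed manifold, and for this it is enough to exhibit infinitely many representations $\rho \in \Omega$ with loxodromic peripheral holonomy whose conjugacy classes are distinct. The conjugacy class of a representation in the component $R_2$ is detected by the parameter $u = \mathrm{tr}(\rho(m_0))$, so the task reduces to finding infinitely many values of $u$ near $3$, inside the parameter domain bounded by $\Delta(x,y)=0$, for which $G(u) = \rho(m_0)$ is loxodromic.

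First I would recall from Proposition \ref{fonction_goldman} that $G(u)$ is loxodromic precisely when $f(u) = |u|^4 - 8\Re(u^3) + 18|u|^2 - 27 > 0$, and that it is regular elliptic when $f(u) < 0$; the curve $\mathcal{C} = \{f = 0\}$ has a singular point at $u = 3$. The key geometric observation — already stated in the excerpt around Figure \ref{curve_char2} — is that because $\mathcal{C}$ is singular at $3$ (two smooth branches crossing), every punctured neighborhood of $3$ meets the open region $\{f > 0\}$ in an open set. Intersecting this with the open parameter domain for the $R_2$ component (which also contains $3$ on its boundary or interior, per Figure \ref{curve_char}), and with the open set $\Omega \subset \mathcal{R}_1(\pi_1(M),\pu21)$ from Proposition \ref{prop_chirurgie_8} — noting $\Omega$ is open and nonempty, containing $\rho_0$, hence contains the representations parametrized by all $u$ in some neighborhood of $3$ — yields a nonempty open set $W$ of admissible parameters $u$ with $G(u)$ loxodromic. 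Any infinite subset of $W$ (e.g. a sequence $u_k \to 3$ inside $W$ with the $u_k$ distinct) gives infinitely many distinct conjugacy classes of representations in $\Omega$ with loxodromic peripheral holonomy, and applying point (1) of Proposition \ref{prop_chirurgie_8} to each produces a \CR{} structure on $M^{(-1,3)}$.

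The only point requiring a little care is that distinct values of $u$ genuinely give non-equivalent \CR{} structures on $M^{(-1,3)}$, rather than structures that happen to coincide. Two \CR{} structures that are $\pu21$-equivalent have conjugate holonomies, so their restrictions to $\pi_1(M) \subset \pi_1(M^{(-1,3)})$ are conjugate in $\pu21$; since the parametrization of $R_2 \cap \su21$ near $\rho_0$ is by the conjugacy invariant $u = \mathrm{tr}(\rho(m_0))$, distinct $u$ force non-conjugate holonomies and hence inequivalent structures. I would therefore phrase the conclusion as: there is a nonempty open set of parameters $u$ near $3$ for which $G(u)$ is loxodromic and the corresponding representation lies in $\Omega$, each giving a \CR{} structure on $M^{(-1,3)}$, and these structures are pairwise inequivalent as $u$ varies, whence there are infinitely many of them. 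The main obstacle — really the only substantive ingredient — is the singularity of $\mathcal{C}$ at $u = 3$, which guarantees the loxodromic region accumulates at the parabolic point $\rho_0$; this is exactly what is recorded in the discussion of Figure \ref{curve_char2} and can be verified directly by examining the Taylor expansion of $f$ at $3$.
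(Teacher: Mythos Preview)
Your approach is essentially the same as the paper's: apply point (1) of Proposition~\ref{prop_chirurgie_8} to the open set of parameters $u$ near $3$ where $f(u)>0$, whose nonemptiness follows from the singularity of $\mathcal{C}$ at $u=3$. The paper states this in one sentence and does not spell out the inequivalence argument you add; your extra care there is fine, but note a small slip: Dehn filling gives a \emph{surjection} $\pi_1(M)\twoheadrightarrow\pi_1(M^{(-1,3)})$, not an inclusion, so the correct phrasing is that conjugate holonomies on $M^{(-1,3)}$ pull back to conjugate representations of $\pi_1(M)$, hence equal $u$.
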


\begin{rem}
 This surgery is the unit tangent bundle to the hyperbolic orbifold $(3,3,4)$. It is a Seifert manifold of type $S^2(3,3,4)$. See for example chapter 5 of the book of Cooper, Hodgson and Kerckhoff \cite{cooper2000three} or the paper of Deraux \cite{deraux2014spherical}. Deraux also remarks in \cite{deraux_uniformizations} (section 4) and \cite{deraux2014spherical} (theorem 4.2), that the image of $\rho_0$ is a faithful representation of the even words of the $(3,3,4)$ triangle group, generated by involutions $I_1,I_2,I_3$. This identification satisfies $G_1=I_2 I_3 I_2 I_1 $, $G_2=I_1 I_2$, $G_3=I_2 I_1 I_2 I_3 $ and the triangle group relations: $(G_2)^4 = (I_1 I_2)^4 = \mathrm{Id}$, $(G_1 G_2)^3 = (I_2 I_3)^3=\mathrm{Id}$ and $(G_2 G_1 G_2)^3 = (I_1 I_3)^3=\mathrm{Id}$. Furtheremore, the image of the usual meridian $m_0$ is $G_3$.
 
 This group is the fundamental group of a Seifert manifold of type $S^2(3,3,4)$. Since the relation $l_0=m_{0}^3$ holds in the whole component $R_2$, the images of representations in $R_2$ are representations of this index two subgroup of the $(3,3,4)$ triangle group. Furthermore, Parker, Wang and Xie show in \cite{parker_wang_xie} that a $\pu21$ representation of the $(3,3,4)$ triangle group is discrete and faithful if and only if the image of $I_1 I_3 I_2 I_3 $ is nonelliptic. Note that $G_1 I_1 I_3 I_2 I_3 = (I_2 I_3)^3 = \mathrm{Id}$, so the representation of the triangle group is discrete and faithful if and only if the corresponding peripheral holonomy is nonelliptic. They also give a one parameter family of such representations, corresponding to parameters $u \in \mathbb{R}$. Thus, there exists $\delta > 0$ such that all the \CR {} structures on the Dehn surgery of $M$ of type $(-1,3)$ with parameter $u$ in the interval $]3,3+\delta[$ have discrete and faithful holonomy.
\end{rem}

Since the parameter is the trace of an element, we know that cases 2 and 3 happen infinitely many times, but we can not distinguish at first sight, for a given trace, if it is a Dehn surgery or a gluing of a $V(p,q,n)$ manifold. Nevertheless, a computation with the explicit parametrization of \cite{character_sl3c} and the continuity of eigenvalues we prove:

\begin{prop}
 There is $\delta > 0$ such that, if $p,n \in \mathbb{N}$ are integers relatively prime such that $\frac{p}{n} < \delta$, then the Dehn surgery of M of type $(-n , -p + 3n)$ admits a \CR {} structure.
\end{prop}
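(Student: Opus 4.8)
The plan is to reduce the statement to the already-established Proposition~\ref{prop_chirurgie_8}, whose second item gives a \CR{} structure on the Dehn surgery of type $(-n,\pm p+3n)$ precisely when the peripheral holonomy $[U]=\rho(m_0)$ is elliptic of type $(\frac{p}{n},\frac{\pm 1}{n})$ for a representation $\rho$ in the open set $\Omega$. So the whole task is to show that there is a $\delta>0$ such that for every pair of coprime positive integers $p,n$ with $\frac{p}{n}<\delta$, one can find a representation $\rho\in\Omega\cap R_2$ for which $\rho(m_0)$ is elliptic of type exactly $(\frac{p}{n},\frac{1}{n})$ (choosing the $+$ sign). Since $\Omega$ is a neighbourhood of $\rho_0$ and the $R_2$-component is parametrized near $\rho_0$ by the trace $u=\mathrm{tr}(\rho(m_0))$ with $u=3$ corresponding to $\rho_0$, it suffices to find, for each such $(p,n)$, a value of $u$ close enough to $3$ (closeness controlled uniformly once $\frac{p}{n}$ is small) realizing the prescribed elliptic type.

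First I would recall that an element of type $(\frac{p}{n},\frac{1}{n})$ is, by definition, conjugate to $E_{\alpha,\beta,\gamma}$ with $\alpha=\frac{2p-1}{3n}$, $\beta=\frac{2-p}{3n}$, $\gamma=\frac{-p-1}{3n}$; as $\frac{p}{n}\to 0$ all three of $\alpha,\beta,\gamma\to 0$, so $E_{\alpha,\beta,\gamma}\to\mathrm{Id}$ and in particular its trace $e^{i\alpha}+e^{i\beta}+e^{i\gamma}\to 3$. So the target traces accumulate at $3$, which is the good news: they all lie in the region of the $u$-plane parametrizing $\Omega$ once $\frac{p}{n}$ is small enough, because that region is an open neighbourhood of $3$. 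The remaining issue is the subtlety flagged in the remark after the definition of type: the trace alone does \emph{not} determine the type — the three types $(\frac{p}{n},\frac{1}{n})$, $(\frac{-p}{n},\frac{1-p}{n})$, $(\frac{p-1}{n},\frac{-1}{n})$ share a trace but are not conjugate, the difference being which eigenvalue is attached to the fixed point inside $\h2c$ (equivalently, to the negative-type eigenvector). So I must verify, for the explicit family $G(u)=\rho(m_0)$ of \cite{character_sl3c} together with the explicit Hermitian form $H$ given above, that when $u$ is the relevant trace the eigenvector lying in $V_-$ (the one on which $H$ is negative) carries the eigenvalue $e^{i\gamma}=e^{-i(p+1)/(3n)}$, not one of the two competitors.

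The key steps, in order: (1) write down the target elliptic type $(\frac{p}{n},\frac{1}{n})$ and its trace $u_{p,n}=e^{i\frac{2p-1}{3n}}+e^{i\frac{2-p}{3n}}+e^{i\frac{-p-1}{3n}}$; observe $u_{p,n}\to 3$ as $\frac{p}{n}\to 0$, uniformly, so there is $\delta_0>0$ with $u_{p,n}$ in the parameter region of $\Omega$ whenever $\frac{p}{n}<\delta_0$. (2) For such $u$, the matrix $G(u)\in\su21$ from \cite{character_sl3c} has characteristic polynomial $X^3-uX^2+\bar uX-1$, whose roots are exactly $e^{i\alpha},e^{i\beta},e^{i\gamma}$ above (here I use that $f(u)<0$ by Proposition~\ref{fonction_goldman}, since $u$ is near the singular point $3$ of $\mathcal{C}$ but on the elliptic side — this itself needs a small check that the target traces sit on the $f<0$ side, which follows from $f(u_{p,n})<0$ for $\frac{p}{n}$ small, a routine expansion). (3) Compute, for a root $\mu$ of the characteristic polynomial, the corresponding eigenvector of $G(u)$ and the sign of $H$ on it; by continuity in $u$ this sign pattern is the same as at $u=3$, and at $u=3$ the structure is the Deraux--Falbel one whose peripheral holonomy is \emph{unipotent} — so I instead track the sign of $H$ on the eigenvector as a function of which root $\mu$ is chosen, using the explicit diagonal $H$, and match it against the requirement that the $V_-$-eigenvalue be $e^{i\gamma}$. (4) Conclude that $G(u_{p,n})$ is elliptic of type $(\frac{p}{n},\frac{1}{n})$, hence by Proposition~\ref{prop_chirurgie_8}(2) the surgery of type $(-n,-p+3n)$ carries a \CR{} structure; set $\delta$ to be the $\delta_0$ from step~(1) intersected with whatever bound steps~(2)--(3) require.

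The main obstacle is step~(3): distinguishing the correct type among the three trace-equivalent candidates. This is genuinely a computation with the explicit (and fairly unpleasant) matrices and Hermitian form of \cite{character_sl3c}, and one must be careful that the branch of $\sqrt{\Delta}$ and the overall normalization are chosen consistently with the conventions there and with the signature-$(2,1)$ sign flip noted in the footnote. The saving grace is that one only needs the \emph{sign} of a single Hermitian pairing, evaluated at a family of traces all near $3$, so a first-order expansion in $u-3$ (equivalently in the small parameter $\frac{p}{n}$) should suffice: the leading behaviour of the eigenvector attached to each root $\mu$ as $u\to 3$ can be read off, and its $H$-norm sign determined, without ever needing the exact expressions. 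Everything else — the accumulation of target traces at $3$, the openness of $\Omega$, the identification of the resulting manifold as the $(-n,-p+3n)$ surgery — is either already in the excerpt or an immediate consequence of it.
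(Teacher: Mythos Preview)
Your overall strategy --- realize the target trace in the $R_2$-family near $u=3$, then verify that the resulting element has the correct elliptic type so that Proposition~\ref{prop_chirurgie_8}(2) applies --- is exactly the paper's. Two points of comparison. First, a sign slip: to obtain the surgery $(-n,-p+3n)$ you need the \emph{minus} sign in Proposition~\ref{prop_chirurgie_8}(2), i.e.\ type $(\frac{p}{n},\frac{-1}{n})$, not $(\frac{p}{n},\frac{1}{n})$; this just flips the signs of your $\alpha,\beta,\gamma$ and is harmless. Second, and more substantively, for the crucial step~(3) the paper avoids your first-order expansion at $u=3$ altogether. It observes instead that eigenvectors and eigenvalues depend continuously on $u$ throughout the \emph{connected component of regular elliptics} in the parameter region (where no eigenvalue collision occurs), so the question ``which eigenvalue is attached to the $V_-$-eigenvector'' has a constant answer on that whole component; one therefore needs only check a \emph{single} explicit point, and the paper verifies $(p,n)=(3,23)$ by direct computation. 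This is both shorter and safer than your asymptotic route, which must contend with the fact that at $u=3$ all three eigenvectors collapse to the unique fixed point of the unipotent $[U_0]$, so the eigenvector branches are not smooth there and a leading-order expansion is delicate. Your approach can be made to work, but the paper's connectivity-plus-one-check argument is the cleaner one.
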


\begin{proof}
 Let $p,n \in \mathbb{N}$ be integers relatively prime. Let $\alpha = \frac{-2p-1}{3n}$, $\beta = \frac{2+p}{3n}$, $\gamma = \frac{p-1}{3n}$ and $u=e^{i\alpha} + e^{i\beta} + e^{i\gamma}$. We only need to show that if $\frac{p}{n}$ is small enough, then the eigenvalue of $\rho(m)=G_3^{-1}(u)$ corresponding to a negative eigenvector is $e^{i\gamma}$, and so $G_3(u)$ will be of type $(\frac{p}{n},\frac{-1}{n})$.
 
 Since eigenvectors and eigenvalues are continuous functions of $u$ in the connected component of regular elliptics, in $R_2$, as in figure \ref{curve_char2},   the statement is true for all $(p,n)$ if and only if it is true for a particular choice of $(p,n)$. For the arbitrary choice $(p,n)=(3,23)$ a computation shows that $G_3(u)$ is of type $(\frac{3}{23},\frac{-1}{23})$.
\end{proof}

\section{ Proof of theorem \ref{thm_chirurgie}.} \label{section_proof}

 In this section, we are going to prove theorem \ref{thm_chirurgie}. We use notation of section \ref{section_surgeries}. We then have a manifold $M$ with a torus boundary $T$, endowed with a \CR {} structure $(\Dev_0,\rho_0)$ of unipotent peripheral holonomy $h_{\rho_0}$  of rank 1 and generated by an element $[U_0] \in \pu21$. We suppose that there is $s \in [0,1[$ such that $\Dev_0(\widetilde{T}_{[s,1[})$ is a $[U_0]$-horotube. Recall that we work with a single boundary component $T$ to avoid heavy notation. The proof works for several boundary components.
 
 In order to prove the theorem, we begin by rewriting the hypotheses to make them easier to handle. The existence of a \CR {} structure on $M$ for a deformation of $\rho_0$ will be a consequence of the Ehresmann-Thurston principle. To extend it to a surgery of $M$, we need only a local surgery result by looking near the boundary of $M_{[0,1[}$. This surgery result is very similar, in cases 1 and 2, to the one given by Schwartz in chapter 8 of \cite{schwartz}.

\subsubsection{Rewritting the hypotheses}\label{nouvelles_hyp}

First of all, we rewrite the second hypothesis.  
Fix a diffeomorphism $\psi : \mathbb{R}^2 \times [0,1[ \rightarrow \widetilde{T}_{[0,1[}$, such that:
 \begin{enumerate}
  \item For all $s \in [0,1[$ $\psi (\mathbb{R}^2 \times \{s\}) = \widetilde{T}_s$.
  \item $\psi$ induces a diffeomorphism between  $\mathbb{R} \times S^1 \times [0,1[ $ and $ \widetilde{T}_{[0,1[} / \ker (h_{\rho_0})$.
 \end{enumerate}
 To avoid too much notation, we identify $\mathbb{R}^2 \times [0,1[$ with $\widetilde{T}_{[0,1[}$ , and also $\mathbb{R} \times S^1 \times [0,1[ $ to $ \widetilde{T}_{[0,1[} / \ker (h_{\rho_0})$.
  In this case, the developing map $\Dev_0$ induces a diffeomorphism between $ \widetilde{T}_{[0,1[} / \ker(h_{\rho_0})$ and $\Dev_0( \widetilde{T}_{[0,1[})$, that we will still call $\Dev_0$.
  We change hypothesis (2) of the theorem for hypotheses (2') and (3) described below :
 
 \paragraph{ Hypothesis (2'): } There are $0<s_1<s_2<1$ such that
  \begin{enumerate}
   \item For all $s \in [s_1,s_2]$, $\Dev_0( \{0\} \times S^1 \times \{s\})$ is a circle transverse to the flow.
   \item For all $(t,\zeta,s) \in \mathbb{R} \times S^1 \times [s_1,s_2]$, $\Dev_0(t,\zeta,s)=\phi_t^{\rho_0}(\Dev_0(0,\zeta,s))$.
  \end{enumerate}\begin{figure}[H]
\center
 \includegraphics[width=10cm]{./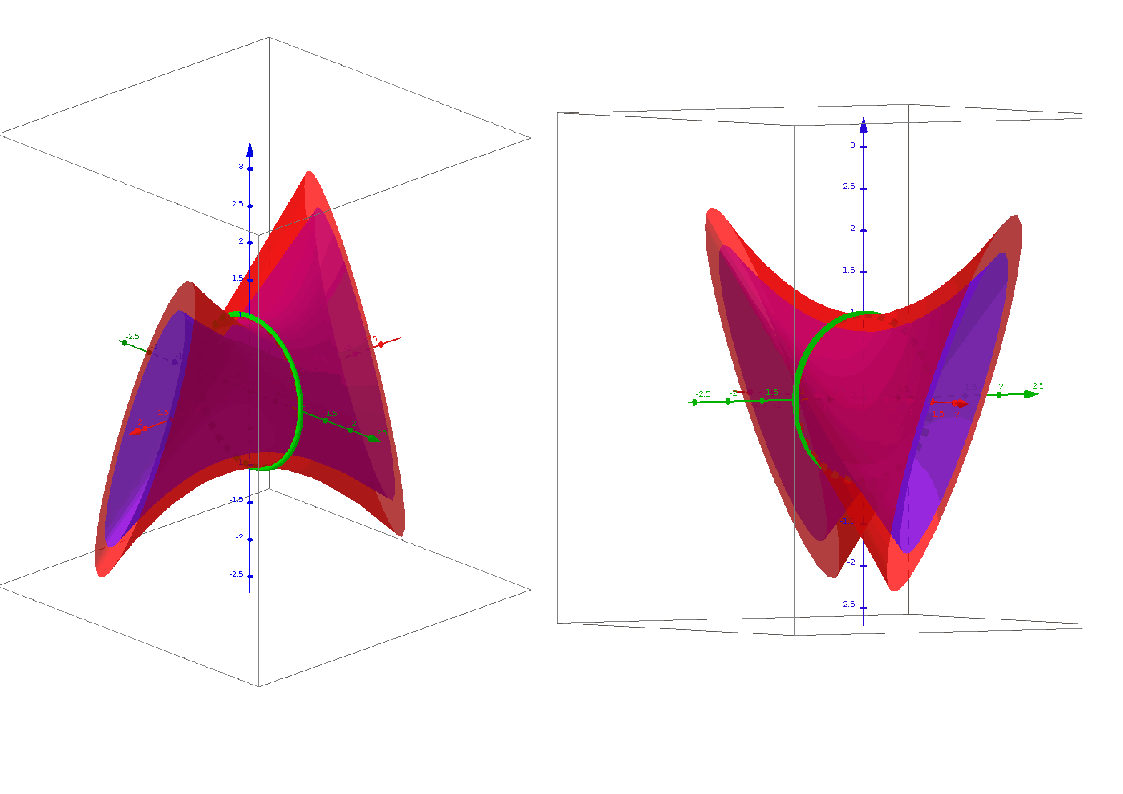}
 \caption{Two views of surfaces bounding a region of the form $\Dev_0( \widetilde{T}_{[s_1,s_2]})$}\label{horotube34}
\end{figure}

\begin{rem}
 Thanks to remark \ref{rq_horotube_gentil}, it is clear that hypothesis 2 implies hypothesis (2'). Perhaps after considering an isotopy and increasing slightly $s$, we can suppose that the horotube $\Dev_0(\widetilde{T}_{[s,1[})$ is nice. We only need then to consider the restriction to a segment $\widetilde{T}_{[s_1,s_2]}$.
\end{rem}

Hypothesis (2') gives, in particular, that $\Dev_0(\widetilde{T}_{s_2})$ separates $\dh2c \setminus \{p\}$ in two connected components: a solid cylinder $C_{s_2}$ and the exterior of this cylinder, which is homeomorphic to $S^1 \times \mathbb{R} \times ]0, + \infty[$. Hypothesis (3) tells us that the structure of $M$ is in the correct side of the tube:

\paragraph{Hypothesis (3) :} $\Dev_0(\widetilde{T}_{s_1})$ is contained in $C_{s_2}$.

\begin{rem}
 Hypothesis (2) is equivalent to hypotheses (2') and (3). The implication from (2) to (2') and (3) is clear, and, if we suppose (2') and (3), the structure can be extended to the outside in such a way that $\Dev_0(\widetilde{T}_{[s_2,1[})$ is the horotube of boundary $\Dev_0(\widetilde{T}_{s_2})$.
\end{rem}

\subsubsection{Deforming the structure}
 We now prove Theorem \ref{thm_chirurgie}. Assume the hypotheses of section \ref{nouvelles_hyp} are satisfied. Let $\rho$ be a deformation close to $\rho_0$ in $\mathcal{R}_1(\pi_1(M), \pu21)$ such that $h_\rho(m) = \mathrm{Id}$. The image of $h_\rho$ is then generated by $[U]=\rho(l)$. We suppose that $[U]$ is a regular element.

 Let $\epsilon > 0$. By the Ehresmann-Thurston principle, if $\rho$ is close enough to $\rho_0$, there is a \CR {} structure on $M_{[0,s_2 + \epsilon[}$ with holonomy map $\rho$. We have then a developping map $\Dev_{\rho} : \widetilde{M}_{[0,s_2 + \epsilon[} \rightarrow \dh2c$ close to $\Dev_0$ in the $\mathcal{C}^1$ topology. So, we can suppose that $\Dev_\rho$ is still a diffeomorphism between the compact set $[-\epsilon,1 + \epsilon] \times S^1 \times [s_1,s_2]$ and its image.
 
 \begin{rem}
 We have then an atlas of charts of $T_{[s_1,s_2]}$ taking values in $\dh2c$ by choosing lifts of $T_{[s_1,s_2]}$ in $[-\epsilon,1 + \epsilon] \times S^1 \times [s_1,s_2] \subset \widetilde{T}_{[s_1,s_2]}$. Transition maps are given by powers of $[U] = \rho(l)$.
 \end{rem}

 Fix $s_1 < s'_1 < s'_2 < s_2$.

\begin{lemme}[Straightening]\label{lemme_redressement}
 If $\rho$ is close enough to $\rho_0$, perhaps after taking an isotopy of $\Dev_\rho$,  we have, for all $(t,\zeta,s) \in \mathbb{R} \times S^1 \times [s'_1,s'_2]$, that $\Dev_\rho(t,\zeta,s) = \phi_t^\rho( \Dev_\rho(0,\zeta,s))$.
\end{lemme}
 \begin{proof}
  The flows $\phi_t^\rho$ and $\phi_t^{\rho_0}$ are close in the $\mathcal{C}^1$ topology when $\rho$ is close to $\rho_0$. We deduce that the deformation from $\rho_0$ to $\rho$ induces a  $\mathcal{C}^1$ deformation from $\phi_t^{\rho_0} \circ \Dev_0$ to $\phi_t^\rho \circ \Dev_\rho$.
  First we restrict to the compact set $[0,1] \times S^1 \times [s'_1,s'_2]$, which is in the interior of $[-\epsilon,1 + \epsilon] \times S^1 \times [s_1,s_2]$.
  
  Since $$\Dev_0([0,1] \times S^1 \times [s'_1,s'_2]) = \bigcup_{t \in [0,1]} \phi_t^{\rho_0}(\{0\}\times S^1 \times [s'_1,s'_2]),$$ if $\rho$ is close enough to $\rho_0$ then  $$\bigcup_{t \in [0,1]} \phi_t^{\rho}(\{0\}\times S^1 \times [s'_1,s'_2])$$ is contained in the interior of $\Dev_\rho([0,1] \times S^1 \times [s_1,s_2])$.
  
  Since $[U] \cdot \phi_t^\rho = \phi_{t+1}^\rho$ and $[U] \cdot \Dev_\rho (t,\zeta,s) = \Dev_\rho(t+1,\zeta,s) $, we can straighten $\Dev_\rho$ by a  $[U]$-equivariant isotopy to have, for $(t,\zeta,s) \in \mathbb{R} \times S^1 \times [s'_1,s'_2]$ that $\Dev_\rho(t,\zeta,s) = \phi_t^\rho( \Dev_\rho(0,\zeta,s))$.
 \end{proof}

From now on, we suppose that for all $(t,\zeta,s) \in \mathbb{R} \times S^1 \times [s'_1,s'_2]$ we have $\Dev_\rho(t,\zeta,s) = \phi_t^\rho( \Dev_\rho(0,\zeta,s))$.

\begin{lemme}\label{lemme_pas_enlace}
 Let $C$ be a $\mathbb{C}$-circle invariant by $[U]$. Then $C$ and the annulus $\Dev_\rho(\{0\} \times S^1 \times [s'_1,s'_2])$ are not linked.
\end{lemme}
 \begin{proof}
 $[U]$ is a regular element close to the unipotent element $[U_0]$, which has fixed point $p_0 \in \dh2c$. Thanks to remarks \ref{rem_axes_lox_loin} and \ref{rem_axes_ell_loin}, we know that $C$ leaves every compact of $\dh2c \setminus \{p_0\}$ when $[U]$ approaches $[U_0]$. Since $\Dev_\rho(\{0\} \times S^1 \times [s'_1,s'_2])$ stays in a fixed compact set when we deform $\rho_0$ to $\rho$, we deduce that $C$ and the annulus $\Dev_\rho(\{0\} \times S^1 \times [s'_1,s'_2])$ are not linked.
 \end{proof}

 It only remains to establish a local surgery result. It is essentially what Schwartz does in chapter 8 of \cite{schwartz}.
 
 Thanks to lemma \ref{lemme_redressement}, we know that $\Dev_\rho(\widetilde{T}_{[s'_1,s'_2-\epsilon]} )$ is the orbit by $\phi_t^\rho$ of the annulus  $A = \Dev_\rho(\{0\} \times S^1 \times [s'_1,s'_2 - \epsilon])$. This orbit separates $\dh2c$ (if $[U]$ est elliptic) or $\dh2c$ minus two points (if $[U]$ is loxodromic), in two connected components $C_1$ and $C_2$, of respective boundaries $\Dev_\rho(\widetilde{T}_{s'_1})$ and $\Dev_\rho(\widetilde{T}_{s'_2})$. We have a proper action of $[U]$ on $C_2$, and so we can consider the quotient manifold $N = C_2 / \langle [U] \rangle$. It is a compact manifold with a torus boundary, endowed with a \CR {} structure which coincides with the structure of $M_{[0,s'_2[}$ on $T_{]s'_2 - \epsilon , s'_2[}$. Thus, the gluing $M_{[0,s'_2[} \cup N / \sim$ has a \CR {} structure which extends the structure $(\Dev_\rho, \rho)$ of $M$.

 We are going to show that if $[U]$ is loxodromic or elliptic of type $(\frac{p}{n},\frac{1}{n})$, then $N$ is a solid torus and that we have a  \CR {} structure on a Dehn surgery of $M$ of a certain slope. If $[U]$ is elliptic of type $(\frac{p}{n},\frac{q}{n})$, we will see a description of $N$ as a complement of a torus knot in some lens space.

%Il nous reste finalement trois cas à considérer pour finir la preuve:

\paragraph{Case 1 : $[U]$ is loxodromic}
 We work in Siegel's model, and we identify $\dh2c$ to $(\mathbb{C} \times \mathbb{R}) \cup \{ \infty \}$. Perhaps after a conjugation, we can suppose that 
 \[U = T_{\lambda} = \begin{pmatrix}
\lambda & 0 & 0 \\
0 & \frac{\overline{\lambda}}{\lambda} & 0 \\
0 & 0 &  \frac{1}{\overline{\lambda}} \\
\end{pmatrix} .\]

$[U]$ has two fixed points: $(0,0)$ and $\infty$. Let $S$ be the sphere centered at $(0,0)$ and of radius 1 in $\mathbb{C} \times \mathbb{R}$. This sphere is a fundamental domain for the action of the flow $\phi_t^\rho$.
 The subgroup generated by $[U]$ acts properly on $(\mathbb{C} \times \mathbb{R}) \setminus (0,0)$, and the region $\bigcup_{t \in [0,1]} \phi_t^\rho(S)$, of boundaries $S$ and $[U] \cdot S$ is a fundamental domain for this action. 
The orbit of $A$ under $\phi_t^\rho$ intersects $S$ in an annulus that separates $S$ in two disks $D_1$ and $D_2$, so that their orbits under $\phi_t$ are the connected components $C_1$ and $C_2$ respectively. Figure \ref{domaine_ch_lox} shows this situation.

\begin{figure}[H]
\center
 \includegraphics[width=6cm]{./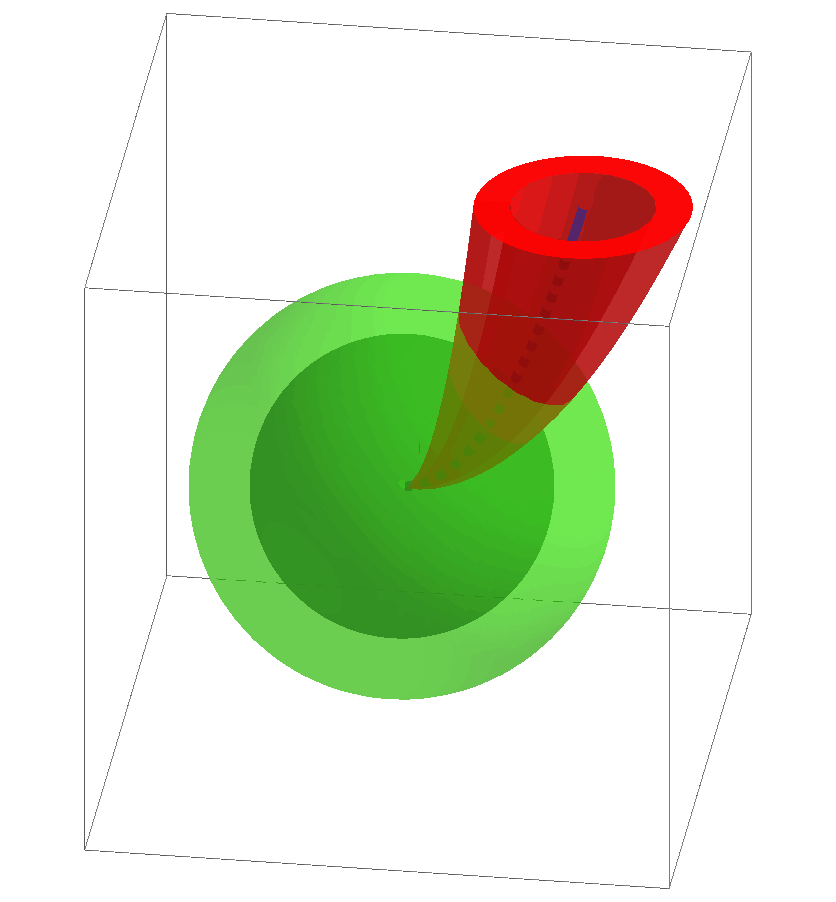}
 \caption{The orbit of $A$ under $\phi_t$ (in red) and the spheres $S$ and $[U] \cdot S$ (in green).}\label{domaine_ch_lox}
\end{figure}

 The quotient manifold $N = C_2 / \langle [U] \rangle$ is obtained by identifying $D_2$ and $[U] \cdot D_2$ in $\bigcup_{t \in [0,1]} \phi_t^\rho(D_2)$. Thus, it is a solid torus. But the curve of $\pi_1(T)$ that becomes trivial in $C_2$ is the one homotopic to the boundary of $D_2$ : so it is $m$. We deduce that the surgery is of type $(0,1)$.

\paragraph{Case 2 : $[U]$ is elliptic of type $(\frac{p}{n},\frac{\pm 1}{n})$.}

By choosing $[U_0]^{\pm 1}$ instead of $[U_0]$ as the generator of the peripheral holonomy, we can suppose that $U$ is of type $(\frac{\pm p}{n},\frac{1}{n})$. For ease of exposition, we write the proof for $[U]$ of type $(\frac{p}{n},\frac{1}{n})$.

We reason in the same way as in the loxodromic case. Thanks to lemma \ref{lemme_pas_enlace}, we know that $\Dev_\rho(\widetilde{T}_{[s'_1,s'_2-\epsilon]} )$ is the orbit under $\phi_t$ of the annulus  $A = \Dev_\rho(\{0\} \times S^1 \times [s'_1,s'_2])$, which is not linked to any of the invariant $\mathbb{C}$-circles of $[U]$.

 The orbit of $A$ under the flow $\phi_t^\rho$ is then homeomorphic to $S^1 \times S^1 \times [s'_1,s'_2]$. Its complement in $\dh2c$ has two connected components. Let $C_2$ be the component with boundary $\Dev_\rho(\widetilde{T}_{s'_2})$. Following remark \ref{rem_noeud_tor}, the orbits of the flow are not knotted: the two connected components are solid tori, and $[U]$ acts properly on each one.  But the quotient of a solid torus by a proper action of a finite group is still a solid torus. The quotient manifold $N = C_2 / \langle [U] \rangle$ is then a solid torus, and we have a  \CR {} structure on a Dehn surgery of $M$. It only remains to identify it.

 Perhaps after a conjugation, we can suppose that
  \[U = e^{\frac{-2i\pi (p+1)}{3n}} \begin{pmatrix}
e^{\frac{2i\pi p}{n}} & 0 & 0 \\
0 & e^{\frac{2i\pi}{n}} & 0 \\
0 & 0 &  1 \\
\end{pmatrix} .\] in the ball model.
 In Siegel's model, by identifying $\dh2c$ to $(\mathbb{C}\times \mathbb{R}) \cup \{\infty\}$, we have that $[CUC^{-1}]$ stabilizes two $\mathbb{C}$-circles: the circle $\mathcal{C}_1$ centered at $0$ of radius $\sqrt{2}$ in $\mathbb{C}\times \{0\}$ and $\mathcal{C}_2$, the axis $\{0\} \times \mathbb{R}$. A generic orbit of the flow turns one time around $\mathcal{C}_1$ and $p$ times around $\mathcal{C}_2$.
 
 Let $\gamma$ be the loop that follows the $\mathbb{C}$-circle $\mathcal{C}_2$ and is oriented so that the meridian $m$ is homotopic to $\gamma$ in the component $C_2$. In this case, $nl$ is homotopic, also in $C_2$, to $-p \gamma$. Thus $nl+pm$ is a homotopically trivial loop in $C_2$, which is a covering of the solid torus $N$ glued to $M$. So it is also a trivial loop in $N$. We deduce that the surgery is of type $(n,p)$.
 
\begin{figure}[H]
\center
 \includegraphics[width=6cm]{./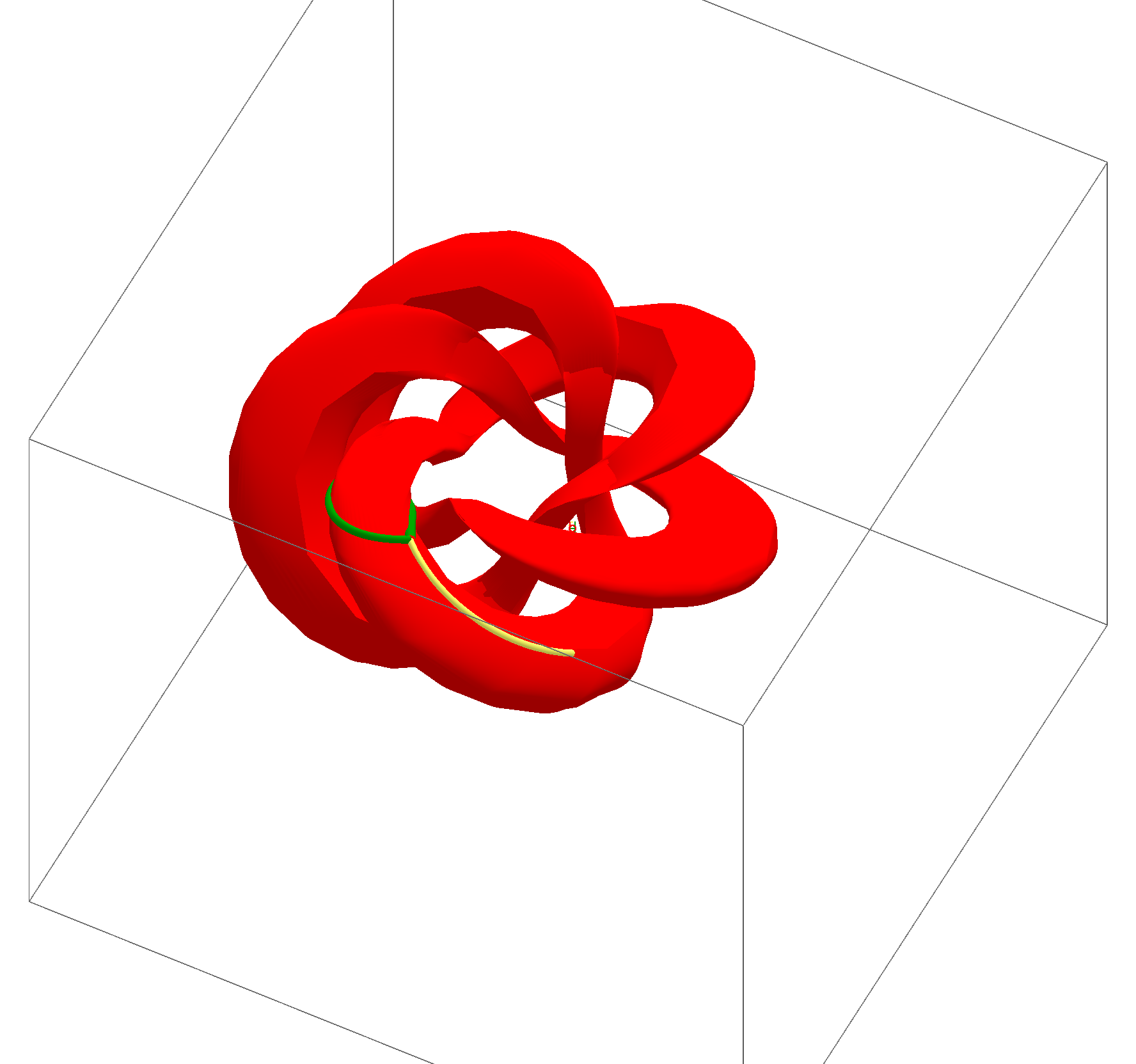}
 \caption{The orbit of $A$ under $\phi_t$ (in red), the longitude $l$ (in yellow) and the meridian $m$ (in green).}\label{orbite_ell3}
\end{figure}

 \paragraph{Case 3 : $[U]$ is elliptic of type $(\frac{p}{n},\frac{q}{n})$.}
 
 The idea is the same as in cases 1 and 2. We know that $\Dev_\rho(\widetilde{T}_{[s'_1,s'_2-\epsilon]} )$ is the orbit by $\phi_t$ of the annulus  $A = \Dev_\rho(\{0\} \times S^1 \times [s'_1,s'_2])$, which is not linked to any of the invariant $\mathbb{C}$-circles of $[U]$.

 The orbit of $A$ under the flow $\phi_t^\rho$ is homeomorphic to $S^1 \times S^1 \times [s'_1,s'_2]$. Its complement in $\dh2c$ has two connected components. Let $C_2$ be (again) the component of boundary $\Dev_\rho(\widetilde{T}_{s'_2})$ and $C_1$ the one of boundary $\Dev_\rho(\widetilde{T}_{s'_1})$. According to remark \ref{rem_noeud_tor}, generic orbits of the flow are torus knots of type $(p,q)$: $C_1$ is then a tubular neighborhood of one of the orbits and $C_2$ is homeomorphic to the complement of a torus knot of type $(p,q)$. But $[U]$ acts properly on $\dh2c$ and stabilizes $C_1$ and $C_2$.
 
 Remark that, in the ball model, the action of the group generated by $[U]$ is the same as the one of the group generated by $(z_1,z_2) \mapsto (e^{\frac{2i\pi}{n}}z_1,e^{\frac{2i\pi \alpha}{n}}z_2)$ where $\alpha \equiv p^{-1}q \mod n$. The quotient $\dh2c / \langle [U] \rangle$ is then homeomorphic to the lens space $L(n,\alpha)$. Furthermore, $C_1 / \langle [U] \rangle$ is a solid torus knotted in $\dh2c / \langle [U] \rangle$. The quotient manifold $V(p,q,n) = C_2 / \langle [U] \rangle$ is the complement of a torus knot in $\dh2c / \langle [U] \rangle \simeq L(n,\alpha)$. The  \CR {} structure of $M$ extends then to the gluing of $M$ and $V(p,q,n)$ through their torus boundary components.

 \bibliographystyle{alpha}
\bibliography{./tex/ref_cr}

\end{document}